\newtheorem{theorem}{Theorem}
\newtheorem{lemma}[theorem]{Lemma}
\renewcommand{\div}{\mathop{\rm div}\nolimits}
\begin{document}

\title{Mixed Generalized Multiscale Finite Element Method for Flow Problem in Thin Domains}

\author{
Denis Spiridonov 
\thanks{Multiscale model reduction Laboratory, North - Eastern Federal University, Yakutsk, Republic of Sakha(Yakutia), Russia, 677980. {\tt d.stalnov@mail.ru}.}
\and
Maria Vasilyeva 
\thanks{Department of Mathematics and Statistics, Texas A\&M University, Corpus Christi, Texas, USA. Email: {\tt maria.vasilyeva@tamucc.edu}.}
\and
Min Wang
\thanks{Department of Mathematics, Duke University, Durham, NC 27705, USA {\tt wangmin@math.duke.edu}.}
\and
Eric T. Chung \thanks{Department of Mathematics,
The Chinese University of Hong Kong (CUHK), Hong Kong SAR. Email: {\tt tschung@math.cuhk.edu.hk}.}
}

\maketitle

\begin{abstract}
In this paper, we construct a class of Mixed Generalized Multiscale Finite Element Methods for the approximation on a coarse grid for an elliptic problem in thin two-dimensional domains. We consider the elliptic equation with homogeneous boundary conditions on the domain walls.  For reference solution of the problem, we use a Mixed Finite Element Method on a fine grid that resolves complex geometry on the grid level.  To construct a lower dimensional model, we use the Mixed Generalized Multiscale Finite Element Method, which is based on some multiscale basis functions for velocity fields. The construction of the basis functions is based on the local snapshot space that takes all possible flows on the interface between coarse cells into account. In order to reduce the size of the snapshot space and obtain the multiscale approximation, we solve a local spectral problem to identify dominant modes in the snapshot space. We present a convergence analysis of the presented multiscale method. Numerical results are presented for two-dimensional problems in three testing geometries along with the errors associated to different numbers of the multiscale basis functions used for the velocity field. Numerical investigations are conducted for problems with homogeneous and heterogeneous properties respectively. 
\end{abstract}

\section{Introduction}

% thin domains applications
In this paper, we consider a class of multiscale problems in thin domains with complex geometries.  Such problems occur in many real world applications, for example, blood flow in vessels, flow in rough fractures, etc.  Problems in thin domains  have been studied in \cite{quarteroni2004mathematical, nachit2013asymptotic, oshima2001finite}. Such problems also occur in the field of engineering modeling, examples include the modeling of fluid flow in pipe networks, oil and gas production, geothermal sources, etc. Moreover, in tasks such as filtration modeling in reservoirs, the computational domain under consideration may be of compounded complexity, as they usually contain many cracks of various shapes and sizes. The length of a crack is usually much greater than its thickness, therefore, in some cases, a fracture can be considered separately \cite{formaggia2014reduced, martin2005modeling, d2008coupling}. 

Thin domain problems are very complicated and often lead to computational system with large number of unknowns. This is due to the fact that very fine grid is required in numerical schemes such as the Finite Element Method\cite{zienkiewicz1977finite, zienkiewicz2005finite, dhatt2012finite} and the Mixed Finite Element Method\cite{raviart1977mixed, boffi2013mixed, gatica2014simple} to accurately describe the heterogeneities and complex shape of the media. One way to improve the computational efficiency is to conduct parallel computing\cite{hussain2013parallel, cliffe2000parallel, douglas1993parallel}. Another option is to use the multiscale model reduction techniques to reduce the size of the computational system\cite{hernandez2014high, chung2018multiscale, allaire2005multiscale}, which we will adopt in our paper. 

The standard multiscale model reduction techniques include homogenization and numerical homogenization methods. The homogenization method can be used to construct the approximation of the solution on a coarse grid by calculating the effective properties of the material.
The methods of numerical homogenization give macroscopic laws and parameters on the basis of local computations, but such approaches are usually based on a priori formulated assumptions \cite{bakhvalov2012homogenisation, talonov2016numerical, hales2015asymptotic, engquist2008asymptotic, tyrylgin2019numerical}. Another type of such technique is the multiscale method. This method constructs multiscale basis functions to extract information of the domain on micro-level and then bridges the information between micro and macro scales \cite{efendiev2009multiscale, efendiev2013generalized, chung2015mixed}.
% multiscale method

Multiscale methods are widely used to solve various problems in domains with complex heterogeneities. Some of the popular methods are the multiscale finite element method(MsFEM)\cite{hou1997multiscale, efendiev2009multiscale, chung2010reduced}, mixed multiscale finite element method(Mixed MsFEM)\cite{chen2003mixed, aarnes2004use}, generalize multiscale finite element method(GMsFEM)\cite{efendiev2013generalized, spiridonov2019generalized, chung2014generalized,chung2020generalized,wang2020generalized}, heterogeneous multiscale methods\cite{engquist2007heterogeneous, abdulle2012heterogeneous, weinan2003heterogeneous}, multiscale finite volume method (MsFVM)\cite{hajibeygi2008iterative, tyrylgin2019embedded, lunati2006multiscale}, constraint energy minimizing generalized multiscale finite element method (CEM-GMsFEM)\cite{chung2018constraint, chung2018constraint} and etc. 
Recently, in \cite{ chung2018non, vasilyeva2019nonlocal, vasilyeva2019constrained}, the authors present a special design of the multiscale basis functions to solve problems in fractured porous media, which obtains the basis functions based on the constrained energy minimization problems and nonlocal multicontinuum (NLMC) method. The multiscale modeling of Darcy flow in perforated domain are presented in \cite{chung2016mixed} while a similar problem in fractured domain is considered in \cite{spiridonov2018multiscale}. There are also many applications of the Mixed MsFEM to the Darcy flow\cite{gulbransen2009multiscale, duran2019multiscale}. 
We can also highlight the Mixed MsFEM implementation for the Darcy-Forchheimer model\cite{spiridonov2019mixed} for flows in fractured media, where Darcy flow has a non-linear nature since the velocity in the fractures is much higher than in the matrix.   
% our concept

In this paper we present a model for Darcy flow in heterogeneous thin domain where the length of the medium is much larger than the width. To provide numerical simulations, we use a multiscale model reduction technique. The reduction is achieved through the application of the Mixed Generalized Multiscale Finite Element Method(Mixed GMsFEM)\cite{chung2015mixed, chung2019mixed}. The algorithm of the Mixed GMsFEM has two stages: the offline stage and the online stage. At the offline stage, we obtain the multiscale basis functions by solving a spectral problem on local domains. The spectral problems are solved in the snapshot space that takes into account all possible flows on the interface between coarse cells.  By using the multiscale basis functions, we obtain an offline space to approximate the flow part of the solution. For the pressure part, we use the piece-wise constant functions to approximate the solution on the coarse grid. At the online stage we solve our problem on the coarse grid using the offline space. 
In this paper, we will provide our experiment results for different shapes of computational domain. We present the dependence of the approximation accuracy of the Mixed GMsFEM solution on the shape of the computational domain.  To show this dependence, we will compare multiscale solutions with the reference solutions. The reference solution is taken to be the solution on the fine grid obtained with Mixed Finite Element Method.

The paper is organized as follows. In Section 2, we present the problem formulation with the approximation on the fine grid given by Mixed Finite Element Method that resolve complex geometry at the grid level.   In Section 3, the construction of the Mixed GMsFEM solutions to the problems in thin domains is presented. In particular, the multiscale basis functions for velocity field on the coarse interfaces are constructed.  Convergence analysis of the presented method is presented in Section 4. In Section 5, we present numerical results for two-dimensional test problems. We consider three testing geometries and investigate the numerical errors for both pressure and velocity using different numbers of the multiscale basis functions.

\section{Problem formulation}

Let $\Omega$ be a two-dimensional thin domain $\Omega \subseteq	 \mathbb{R}^2$, where thickness is much smaller than length of the domain (see Figure \ref{domain}).  
Boundaries $\Gamma_{\text{in}}$ and $\Gamma_{\text{out}}$ are the inlet and outlet boundaries of the thin domain $\Omega$, while $\Gamma_{\text{w}} := \partial \Omega / (\Gamma_{\text{in}} \cup \Gamma_{\text{out}})$.  

\begin{figure}[h!]
\centering
\includegraphics[width=0.9\linewidth]{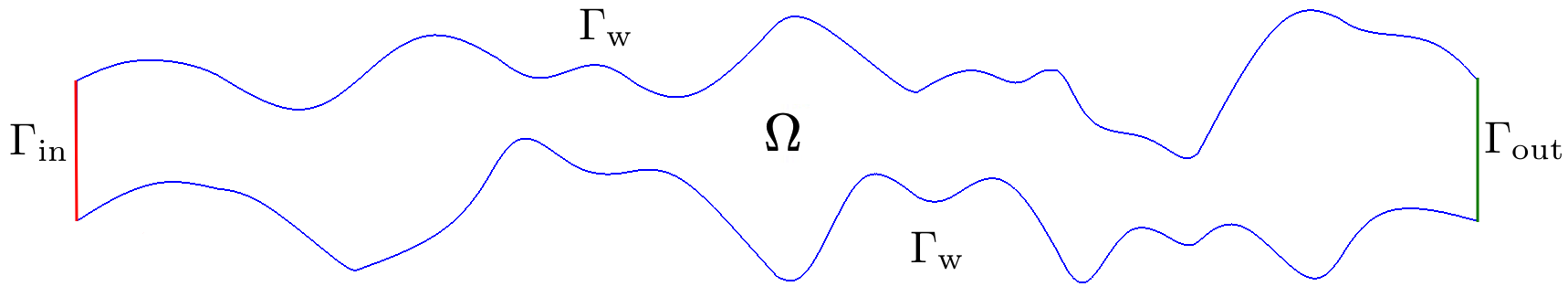} 
\caption{
Illustration of the thin domain $\Omega$, 
$\partial \Omega = \Gamma_{\text{in}} \cup \Gamma_{\text{out}}  \cup \Gamma_{\text{w}}$. 
}
\label{domain}
\end{figure}
 
We consider an elliptic equation on the  thin domain $\Omega$ 
\begin{equation}
\label{eq:mm}
\begin{split}
\kappa^{-1} u + \nabla p &= 0, \quad x \in \Omega, \\
\nabla \cdot u & = f,  \quad x \in \Omega,
\end{split}
\end{equation}
where $u$ is the velocity field,  $p$ is the pressure, $\kappa$ is the heterogeneous coefficient and $f$ is the source term. On $\Gamma_{\text{w}}$, a homogeneous boundary conditions is imposed to the system \eqref{eq:mm}:
%The system of equations \eqref{eq:mm} is considered with homogeneous boundary conditions on $\Gamma_{\text{w}}$ 
\begin{equation}
\label{eq:bcw}
u \cdot n = 0, \quad x \in \Gamma_{\text{w}},
\end{equation}
and the non-homogeneous boundary conditions are imposed to the pressure on the inlet $\Gamma_{\text{in}}$ and outlet $\Gamma_{\text{out}}$ boundaries
\begin{equation}
\begin{cases} 
      p =p_1 &  x \in \Gamma_{\text{in}} \\
       p =p_2 &  x \in \Gamma_{\text{out}} \\
   \end{cases},
    \label{eq:bc}
\end{equation}
% \begin{equation}
% \label{eq:bc}
% p = p_1,  \quad x \in \Gamma_{\text{in}} \quad 
% p = p_2, \quad x \in \Gamma_{\text{out}}, 
% \end{equation}
where $n$ is the outward normal vector to the domain boundary.

\noindent\noindent\textbf{Variational formulation}

We present a weak formulation of problem \eqref{eq:mm}, \eqref{eq:bcw}, \eqref{eq:bc} as follows. 
Let 
\[
V := \{v\in H(\text{div}; \Omega) \ : \ v \cdot n = 0 \text{  on  } \Gamma_{\text{w}} \}
\]
and $Q := \mathcal{L}^2(\Omega)$.
We have  the following variational formulation:  Find $(u, p) \in V \times Q$ such that
\begin{equation}
\begin{split}
-\int_{\Omega}  \kappa^{-1} u \, v \, dx
+ \int_{\Omega} p \,\nabla \cdot v \, dx
&= \int_{\Gamma_{\text{in}}} p_1 \ v \cdot n \ ds 
+ \int_{\Gamma_{\text{out}}} p_2 \ v \cdot n \ ds , \quad  \forall v \in V, \\
\int_{\Omega} q \, \nabla \cdot u \, dx 
&= \int_{\Omega} f \, q \, dx, \quad \forall q \in Q.
\end{split}
\end{equation}

\begin{figure}[h!]
\centering
\includegraphics[width=0.9\linewidth]{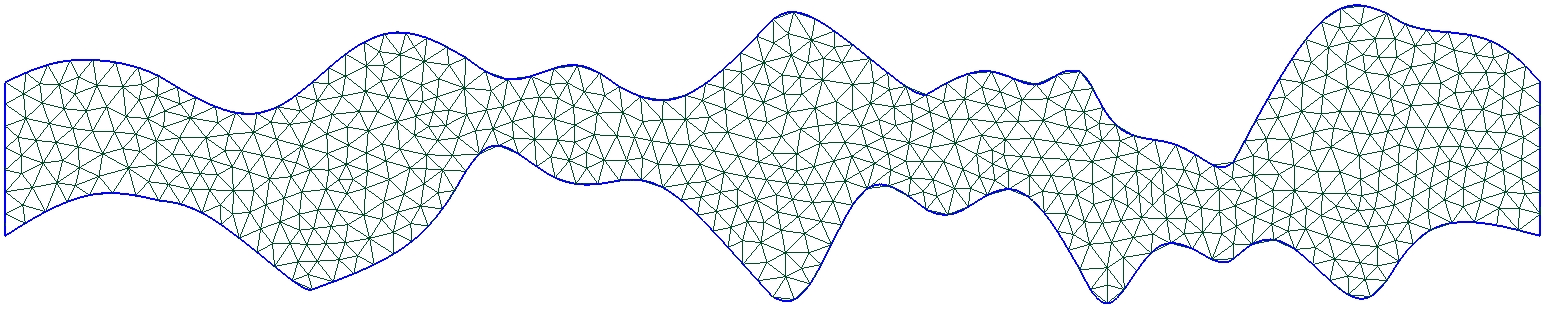} 
\caption{
Illustration of the fine grid $\mathcal{T}_h$ for domain $\Omega$. 
}
\label{finemesh}
\end{figure}

\noindent\textbf{Fine grid approximation}

We construct a fine grid $\mathcal{T}_h$ that resolves the complex geometry on a grid level (see Figure \ref{finemesh}).   
Let $V_h \subset V$ and $Q_{h}\subset Q$ be the finite element subspaces on the fine grid $\mathcal{T}_h$. In particular, 
we use the lowest order Raviart-Thomas elements for velocity and piece-wise constant elements for pressure respectively.  
Then,  we have the following discretized problem on the fine grid: Find $(u_h, p_h) \in V_h \times Q_h$ such that
\begin{equation}
\label{fine_problem}
\begin{split}
a(u_h, v_h) + b(p_h, v_h) & = g(v_h),  \quad \forall v_h \in V_h, \\
b(q_h,  u_h) & = f(q_h), \quad \forall q_h \in Q_h.
\end{split}
\end{equation}
where 
\[
\begin{split}
&a(u_h, v_h) := -\int_{\Omega}  \kappa^{-1} \ u_h \ v_h \ dx,  \qquad \qquad \qquad \quad
b(p_h, u_h) := \int_{\Omega} p_h \ \nabla \cdot u_h \ dx, 
\\
&g(v_h) := \int_{\Gamma_{\text{in}}} p_1 \ v_h \cdot n \ ds 
+ \int_{\Gamma_{\text{out}}} p_2 \ v_h \cdot n \ ds , \quad 
\ f(q_h) := \int_{\Omega} f \, q_h \, dx. 
\end{split}
\]
We can then write the discrete system \eqref{fine_problem} in the  following matrix form 
\begin{equation}
 \begin{pmatrix}
A_h & B_h^T \\
 B_h & 0
 \end{pmatrix} \begin{pmatrix}
 U_h \\
 P_h
 \end{pmatrix} = \begin{pmatrix}
G_h \\
F_h
 \end{pmatrix},
\end{equation} 
\[
A_h = \{a_{ij} = a(\psi_i, \psi_j) \},  \quad
B_h = \{b_{ij} = b(\phi_i, \psi_j) \}, 
\]\[
G_h = \{g_j  = g(\psi_j) \},   \quad 
F_h = \{f_j  = f(\phi_j) \}.
\]
where $\psi_m$ and $\phi_m$ are the basis functions for velocity and  pressure on the fine grid respectively.

\section{Mixed Generalized Multiscale Finite Element Method}

In this section,  we describe the coarse grid approximation for problem \eqref{eq:mm} 
% in thin domains with homogeneous boundary conditions on the domain walls $\Gamma_{\text{w}}$ 
using the Mixed Generalized Multiscale Finite Element Method (Mixed GMsFEM). We describe the construction of the approximation on the coarse grid with the use of the multiscale basis functions for the velocity field. 
Since we consider the problem in thin domains with no-flow conditions on the wall boundary $\Gamma_{\text{w}}$, 
we only
% construct a lower dimensional multiscale approximation and 
compute multiscale basis functions for vertical edges of the coarse grid.  

\begin{figure}[h!]
\centering
\includegraphics[width=0.9\linewidth]{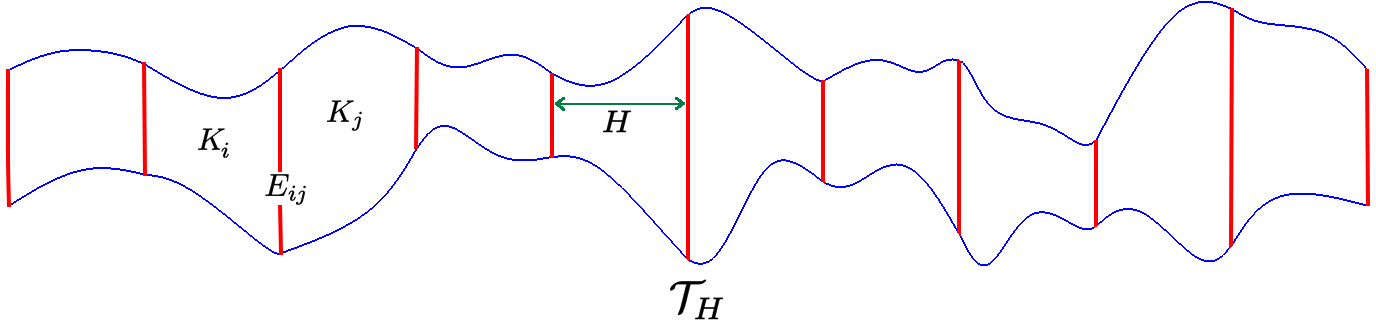} 
\caption{Illustration of the coarse grid, $\mathcal{T}_H$ 
}
\label{coarsemesh}
\end{figure}

Let $\mathcal{T}_H$ be a coarse grid of the computational domain $\Omega $ with a coarse grid size $H$
\[
\mathcal{T}_H = \cup_{i = 1}^{N_C} K_i,
\]
and $\mathcal{E}_H  = \cup E_{ij} $ is the set of all interfaces between two cells of the coarse grid,  $N_E$  and $N_C$ are the number of edges and cells of the coarse grid respectively.  
In particular, we construct the  multiscale basis functions for the velocity field following the Mixed GMsFEM in the local domains $\omega_{ij} := \{ K_i \cup K_j : K_i \cap K_j = E_{ij}, \ K_i, K_j \in \mathcal{T}_H\}$ that associated with the coarse edge $E_{ij} \in \mathcal{E}_H$ (see Figure. \ref{coarsemesh}).  Without lose of generality, we use a single index to denote local domains and coarse grid edges $\omega_i$ and $E_i$ from now on. 

We first need to construct the multiscale space for velocity field
\begin{equation}
V_H = 
\text{span} \{ \psi^{(i)}_l: \ 1 \leq i \leq N_E, \ 1 \leq l \leq M^{(i)} \}, 
\end{equation}
by expanding the multiscale basis function $\psi^{(i)}_l$ which are computed in the local domain $\omega _i$. $M^{(i)}$ denotes the number of basis functions in each local domain. We then take the space $Q_H$, which is constituted by piece-wise constant functions on the coarse cells, for pressure.
A coarse-scale solution can thus be obtained by solving the following problem: Find $(u_H, p_H) \in V_H \times Q_H$  such that
\begin{equation}
\label{coarse_problem}
\begin{split}
a(u_H,  v_H) + b(p_H, v_H) &= g(v_H),  \quad \forall v_H \in V_H, \\
b(q_H, u_H) &=  f(q_H), \quad \forall q_H \in Q_H.
\end{split}
\end{equation}

The construction of the multiscale basis functions $\psi^{(i)}_l \in \omega_i$ for the velocity field contains several steps: 
\begin{enumerate}[label=Step \arabic*:]
\item Generate the snapshot space in local domain $\omega_i$ for all possible flows through the edge $E_i$.
\item Obtain the multiscale basis functions $\psi_l^{(i)}$ by solving the local spectral problem in the local snapshot space associated to $\omega_i$. 
\end{enumerate}

We note that, the spectral problem is used to extract dominant modes in a snapshot space and reduce the dimension of the space used to approximate the velocity field.

\noindent\textbf{Snapshot space. }

To construct a snapshot space in $\omega_i$ we solve the following local problem: Find ($ \varphi^{(i)}_j,  \eta^{(i)}_j ) \in V^{(i)}_h \times Q^{(i)}_h $ such that
\begin{equation}
\label{eq:snapshot_problem}
\begin{split}
- \int_{\omega_i} \kappa^{-1} \varphi^{(i)}_j v \, dx 
+ \int_{\omega_i} \eta^{(i)}_j \, \nabla \cdot v \, dx 
& = 0, 
\quad \forall v \in V^{(i)}_h, \\
\int _{\omega _i} r \ \nabla \cdot \varphi^{(i)}_j, dx 
& = \int _{\omega _i} c^{(i)}_j \ r \ dx, 
\quad \forall r \in Q^{(i)}_h ,
\end{split}
\end{equation}
with boundary condition
\begin{equation} 
\label{eq:snapshot_problem_bc1}
\varphi^{(i)}_j \cdot n = 0, \quad x \in \partial \omega_i,
\end{equation}
and additional boundary condition on the coarse edge $E_i$
\begin{equation} 
\label{eq:snapshot_problem_bc2}
\varphi^{(i)}_j \cdot n = \delta_j, \quad x \in E_i, 
\end{equation}
where $j = 1, ...,J^{(i)}$,  $V_h^{(i)}: = V_h(\omega_i) $ and $Q_h^{(i)} : = Q_h({\omega_i})$ and  $n$ is the normal vector to the boundary.  
Here $c^{(i)}_j  := \frac{|e_j|}{|\omega_i|}$ is chosen by a compatibility condition,  where 
$|e_j | $ is the length of the fine grid edge $e_j$,   
$|\omega_i|$ is the volume of the local domain $\omega _i$,  
$J^{(i)}$  is the number of fine grid edges $e_j$ on $ E_i$,  $E_i = \cup^{J^{(i)}}_{j=1} e_j $, 
$\delta _j$ is a piece-wise constant function defined on $E_i$ which takes the value $ 1 $ on $e_j$ and $ 0 $ in all other edge of the fine grid. 

We further define the local snapshot space as
\[
V_{\text{snap}}^{(i)} = \text{span} \{ \varphi^{(i)}_j, \ 1 \leq j \leq J^{(i)} \}.
\]

\noindent\textbf{Multiscale space}

We  perform dimension reduction of the snapshot space $V_{\text{snap}}^{(i)} $ by solving the following spectral problem: solve for eigen pairs $\lambda$ and $\Psi^{(i)}$ for 
\begin{equation}\label{spectral}
a^{(i)}(\Psi^{(i)}, v) = \lambda \ s^{(i)}(\Psi^{(i)},  v),  \quad 
\forall v \in V_{\text{snap}}^{(i)},
\end{equation}
where 
\[
a^{(i)}(u, v) 
:= \int_{E_i} \kappa^{-1} (u \cdot n)(v \cdot n) \ ds, 
\quad 
s^{(i)}(u, v) 
:= \int _{\omega _i} \kappa^{-1} u \ v \ dx + \int _{\omega _i} \nabla \cdot u \ \nabla \cdot v\ dx.
\]

This is equivalent to solving the discrete system:
\begin{equation} 
\label{eq8}
\tilde{A}^{(i)} \Psi^{(i)} = \lambda^{(i)} \  \tilde{S}^{(i)} \Psi^{(i)}, 
\end{equation}
where 
\[
\tilde{A}^{(i)} = R^{(i)}_{\text{snap}} A^{(i)} (R^{(i)}_{\text{snap}})^T,  \quad 
\tilde{S}^{(i)} = R^{(i)}_{\text{snap}} S^{(i)} (R^{(i)}_{\text{snap}})^T,   \quad 
R^{(i)}_{\text{snap}} = [ \varphi^{(i)}_1, ...,  \varphi^{(i)}_{J^{(i)}}]^T
\]
with 
$A^{(i)} = \{a_{mn} = a^{(i)}(\psi_m, \psi_n)\}$ and 
$S^{(i)} = \{s_{mn} = s^{(i)}(\psi_m, \psi_n)\}$.  

\begin{figure}[h!]
\centering
\includegraphics[width=0.9\linewidth]{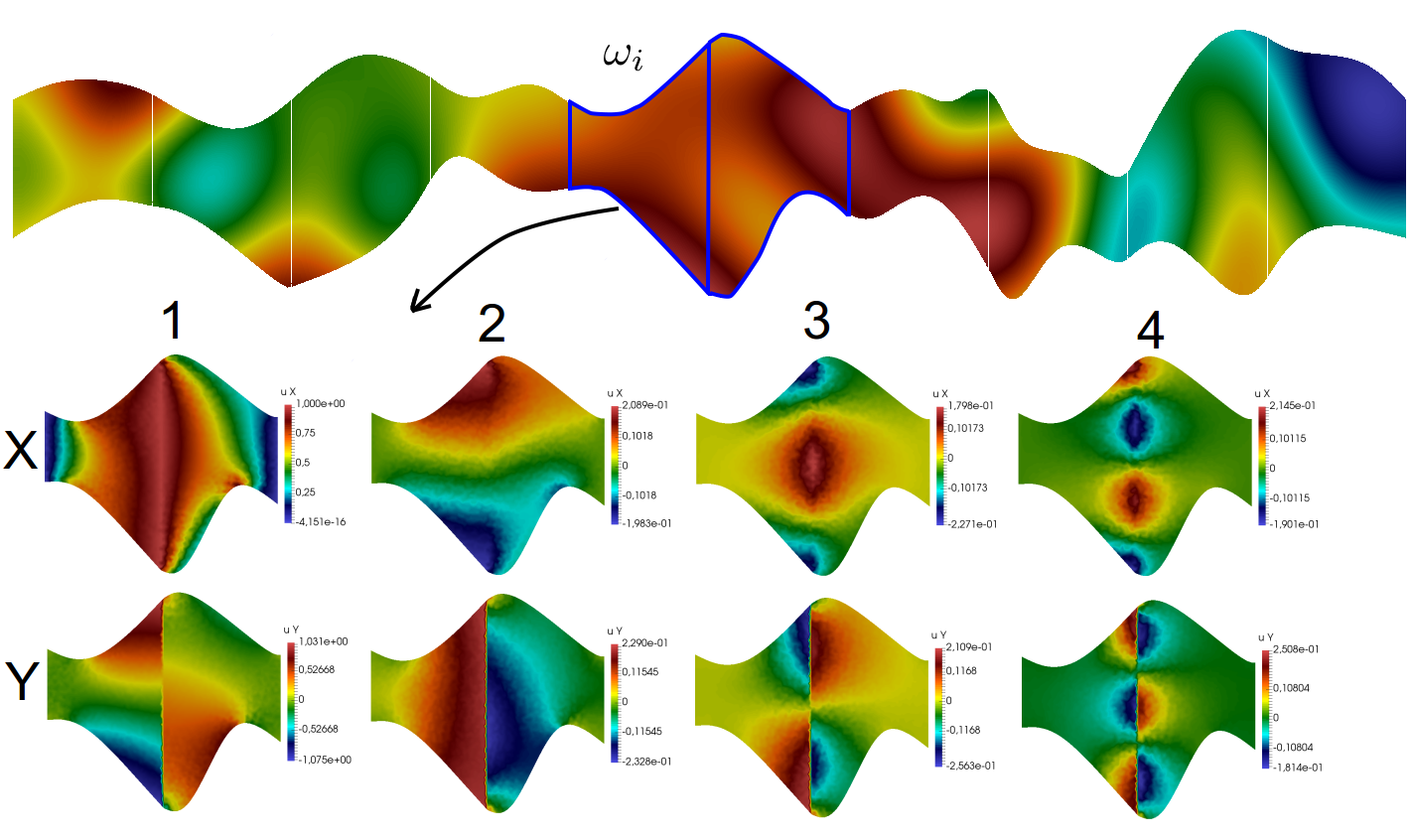} 
\caption{Illustration of the first four multiscale basis functions in local domain}
\label{msbasis}
\end{figure}

For the construction of the multiscale space, we sort the eigenvalues in ascending order. We select the first $M^{(i)}$ eigenvalues and take the corresponding eigenvectors $\psi^{(i)}_l = (R^{(i)}_{\text{snap}})^T \Psi^{(i)}_l $ as basis functions, $l = 1,...,M^{(i)}$.
The first four multiscale basis functions for velocity are presented in Figure \ref{msbasis}.

Finally,  we obtain the following multiscale space for velocity by expanding all local multiscale basis 
\begin{equation}
V_H = \text{span} \{ \psi^{(i)}_l,  \ 1 \leq l \leq M^{(i)},  \ 1 \leq i \leq N_E \}.
\end{equation}

\noindent\textbf{Coarse grid system}

For the construction of the coarse grid system, we define a projection matrix
\begin{equation} 
\label{eq13}
R = \begin{bmatrix}
R_u & 0 \\
0 & R_p 
\end{bmatrix}, 
\end{equation}
where 
\[
R_u = [ 
\psi^{(1)}_1, ..., \psi^{(1)}_{M^{(1)}}, ..., 
\psi^{(N_E)}_1, ..., \psi^{(N_E)}_{M^{(N_E)}} ], 
\]  
and  $R_p$ is the projection matrix for pressure, where we set one for each fine grid cell in the current coarse grid cell and zero otherwise.  

Using the projection matrix, we obtain the following coarse scale system in matrix form 
\begin{equation}
\label{eq:coarse}
 \begin{pmatrix}
 A_H & B^T_H \\
 B_H & 0
 \end{pmatrix} \begin{pmatrix}
 U_H \\
 P_H
 \end{pmatrix} = \begin{pmatrix}
G_H \\
F_H
 \end{pmatrix},
 \end{equation}
 where
 \begin{equation}
 A_H = R_u A_h R^T_u ,  \quad 
 B_H = R_u B_h R^T _p, \quad
 G_H = R_u G_h \quad 
 F_H = R_p F_h.
 \end{equation}
After solving the coarse grid system \eqref{eq:coarse},  we reconstruct the fine grid velocity field using the projection matrix, $U_{\text{ms}} = R^T_u U_H$.

\section{Convergence Analysis}

In this section, we prove the convergence of the Mixed GMsFEM for problems in thin domains with no flow boundary conditions on wall boundary. This analysis is based on our previous papers \cite{MixedGMsFEM, chung2016mixed}. 

We define the following norms and notations:
\begin{itemize}
\item  $||q||_{L^{2}(\Omega)}^{2}=\int_{\Omega}q^{2}\ dx$ is the $L^2$ norm for a scalar function $q \in L^2(\Omega)$;
\item $||v||_{\kappa^{-1}, \Omega}^{2} = \int_{\Omega} \kappa^{-1}|v|^{2}\ dx$ is the weighted $L^2$ norm for vector function $v$;
\item $||v||_{{H(\div,\Omega)};\kappa^{-1}}^{2} = ||v||_{\kappa^{-1}, \Omega}^{2} + ||\nabla \cdot v||_{L^{2}(\Omega)}^{2}$ is the norm in the Sobolev space $H(\div,\Omega)$ for vector function $v$ with $v \in [L^2(\Omega)]^2$ and $\div(v) \in L^2(\Omega)$;
\item $\alpha \preceq \beta$ means there is a constant $C>0$ such that $\alpha \leq C\beta$;
%\item $N_C$ is the number of coarse blocks in $\mathcal{T}_H$.
\end{itemize}

%%% T1 %%%
\begin{lemma}
Let $(u_h, p_h)\in V_h \times Q_h$ be fine solutions of (\ref{fine_problem}) and $\hat{u}$ be the weak fine-scale solution of the following problem
\begin{equation}
\label{snap-project}
\begin{split}
\kappa^{-1} \hat{u}+\nabla \hat{p} &=0, \quad x \in K,\\
\nabla\cdot \hat{u} &=\bar{f}, \quad x \in K,
\end{split}
\end{equation}
subject to
\[
\hat{u}\cdot n = 0, \quad x \in \Gamma_{\text{w}},\quad 
\hat{u}\cdot n =u_h\cdot n, \quad x \in  \partial K / \Gamma_{\text{w}},
\]
and
\[
\int_{K}\hat{p}  \ dx= \int_{K} p_h \ dx,   
\]
where $K$ is any coarse cell in $\mathcal{T}_H$ and $\bar{f}=\frac{1}{|K|}\int_{K} f \ dx$ is the average value of $f$ over $K$.   
Assume that 
$\kappa_{\text{min,K}}^{-1} : = \min{\{\kappa^{-1}(x)\ |\ x\in K \}}<\infty$ for $\forall K \in \mathcal{T}_H$,  then we have
\begin{equation}
\label{lemma1}
||u_h-\hat{u}||_{\kappa^{-1}, \Omega}^2
\preceq
\underset{K\in \mathcal{T}_H}{\max} \left( \kappa_{\text{min,K}}^{-1} \right)
\sum_{i} || f - \bar{f}||_{L^2(K_i)}^2. 
\end{equation}
\end{lemma}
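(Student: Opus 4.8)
The plan is to localize the estimate to each coarse cell $K$ and reduce it to a standard mixed-method stability argument. First I would derive the error equations. Writing $e_u = u_h - \hat{u}$ and $e_p = p_h - \hat{p}$, I observe that since $u_h \cdot n = 0$ on $\Gamma_{\text{w}}$ (as $u_h \in V_h \subset V$) and $\hat{u}\cdot n = u_h \cdot n$ on $\partial K / \Gamma_{\text{w}}$, the error has vanishing normal trace on all of $\partial K$; hence $e_u|_K$ lies in the interior velocity space $V_h^0(K) := \{ v \in V_h(K) : v \cdot n = 0 \text{ on } \partial K \}$, and the mean condition $\int_K \hat{p}\,dx = \int_K p_h\,dx$ gives $\int_K e_p\,dx = 0$. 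Testing the global fine problem \eqref{fine_problem} against velocity fields supported in $K$ with zero normal trace makes $g(\cdot)$ vanish, so subtracting the weak form of the local problem \eqref{snap-project} yields, on each $K$, the two relations $\int_K \kappa^{-1} e_u \cdot v\,dx = \int_K e_p\,\div v\,dx$ for all $v \in V_h^0(K)$, and $\int_K q\,\div e_u\,dx = \int_K q\,(f-\bar{f})\,dx$ for all $q \in Q_h(K)$. In particular $\div e_u$ is the $L^2$-projection of $f-\bar{f}$ onto the fine piecewise constants, which has zero mean over $K$.

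Next I would take $v = e_u$ in the first error relation and $q = e_p$ in the second (both are admissible by the trace and mean observations above), which combine into the energy identity $\|e_u\|_{\kappa^{-1},K}^2 = \int_K e_p\,\div e_u\,dx = \int_K e_p\,(f-\bar{f})\,dx \le \|e_p\|_{L^2(K)}\,\|f-\bar{f}\|_{L^2(K)}$. It then remains to control $\|e_p\|_{L^2(K)}$. Here I would invoke the discrete inf-sup (LBB) stability of the lowest-order Raviart–Thomas / piecewise-constant pair on the fine triangulation of $K$: since $e_p$ has zero average, there exists $w \in V_h^0(K)$ with $\div w = e_p$ and $\|w\|_{L^2(K)} \preceq \|e_p\|_{L^2(K)}$. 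Substituting this $w$ into the first error relation gives $\|e_p\|_{L^2(K)}^2 = \int_K \kappa^{-1} e_u \cdot w\,dx \le \|e_u\|_{\kappa^{-1},K}\,\|w\|_{\kappa^{-1},K}$, and bounding $\|w\|_{\kappa^{-1},K}^2 = \int_K \kappa^{-1}|w|^2\,dx \le \kappa_{\text{min,K}}^{-1}\,\|w\|_{L^2(K)}^2$ (using $\kappa_{\text{min,K}}^{-1}$ as the uniform bound for $\kappa^{-1}$ on $K$) yields $\|e_p\|_{L^2(K)} \preceq \sqrt{\kappa_{\text{min,K}}^{-1}}\,\|e_u\|_{\kappa^{-1},K}$.

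Combining the two displays, the common factor $\|e_u\|_{\kappa^{-1},K}$ cancels and I obtain $\|e_u\|_{\kappa^{-1},K}^2 \preceq \kappa_{\text{min,K}}^{-1}\,\|f-\bar{f}\|_{L^2(K)}^2$. Summing over all coarse cells $K_i \in \mathcal{T}_H$, using additivity $\|e_u\|_{\kappa^{-1},\Omega}^2 = \sum_i \|e_u\|_{\kappa^{-1},K_i}^2$ and pulling the worst weight out of the sum as $\max_{K \in \mathcal{T}_H} \kappa_{\text{min,K}}^{-1}$, gives exactly \eqref{lemma1}.

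The main obstacle is the inf-sup step: I need the right inverse of the divergence on $V_h^0(K)$ to be bounded with a constant independent of the fine mesh size $h$, and I must track how that constant depends on the geometry and aspect ratio of the thin cell $K$, since it is precisely what is hidden in $\preceq$ and governs the sharpness of the estimate. The other ingredients — the vanishing normal trace of $e_u$, the zero pressure mean, and the identification of $\div e_u$ with the projected source mismatch $f-\bar{f}$ — are routine once the error equations are in place. As an alternative I would phrase the whole argument through the minimal-$\kappa^{-1}$-norm characterization of $e_u$: the first error relation shows $e_u$ is $\kappa^{-1}$-orthogonal to the discrete divergence-free subspace of $V_h^0(K)$, so it is the least-$\kappa^{-1}$-norm field carrying the prescribed divergence, and one may bound it directly by comparing with any convenient $w \in V_h^0(K)$ satisfying $\div w = \div e_u$, thereby bypassing the explicit pressure estimate.
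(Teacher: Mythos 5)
Your argument is correct and follows essentially the same route as the paper: derive the local error equations on each coarse cell $K$, take the diagonal test functions to obtain the energy identity $\|u_h-\hat{u}\|_{\kappa^{-1},K}^2=\int_K(f-\bar{f})(p_h-\hat{p})\,dx$, bound the pressure error via the discrete inf-sup condition for the Raviart--Thomas/piecewise-constant pair, and sum over coarse cells. Your version is a little more explicit than the paper's about why the test space localizes (vanishing normal trace of $u_h-\hat{u}$ on $\partial K$, zero mean of $p_h-\hat{p}$), but the substance is identical.
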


\begin{proof}
Notice that, by construction,  $\hat{u} \in V_h \bigcap V_{\text{snap}}$. 
Then, by subtracting variational form of \eqref{snap-project} from \eqref{fine_problem}, we have
\begin{equation}
\label{proj_snap}
\begin{split}
\int_{K} \kappa^{-1}(u_h - \hat{u})\cdot w_h  \ dx
- \int_{K} (p_h-\hat{p}) \ \nabla \cdot w_h \ dx  
& = 0,  \quad \forall w_h \in V_h(K),\\
\int_{K} q_h \ \nabla \cdot(u_h - \hat{u}) \ dx 
&= \int_K ( f - \bar{f} ) \ q_h \ dx,  \quad \forall q_h \in Q_h(K).
\end{split}
\end{equation}
Taking $w_h = u_h - \hat{u}$ and $q_h = p_h - \hat{p}$ in (\ref{proj_snap}), we can then get
\begin{equation}
\label{eq:5.4}
\int_K \kappa^{-1} (u_h - \hat{u})^2   \ dx
= \int_K (f - \bar{f})(p_h - \hat{p}) \ dx.
\end{equation}

Since functions contained in Raviart-Thomas space satisfies the inf-sup condition, we have
\begin{equation}
\label{inf-sup}
||q_h||_{L^2(K)} \preceq \underset{w_h \in V_h(K)}{\text{sup}}\frac{\int_K q_h \ \nabla \cdot w_h \ dx}{||w_h||_{H(\div; K)}},
\quad \forall q_h\in Q_h(K).
\end{equation}
Combining condition \eqref{inf-sup} with \eqref{proj_snap},  we can get:
\[
||p_h - \hat{p}||_{L^2(K)} 
\preceq 
\kappa_{\min, K}^{-\frac{1}{2}} ||u_h-\hat{u}||_{\kappa^{-1}, K}.
\]
Then by \eqref{eq:5.4}
\begin{equation}
||u_h-\hat{u}||_{\kappa^{-1}, K} 
\preceq 
\kappa_{\min,  K}^{-\frac{1}{2}} ||f-\bar{f}||_{L^2(K)}.
\end{equation}
Summing up the results for all coarse blocks, we obtain the desired conclusion.
\end{proof}

In the next result, we will prove that our mixed GMsFEM satisfies an inf-sup condition,
which is important for the convergence analysis of the method.

%%% T2 %%%
\begin{theorem}
For all $p \in Q_H$,  we have 
\begin{equation}
\label{eq:inf_sup}
||p||_{L^2(\Omega)} 
\preceq  
C_{\text{infsup}} \underset{u \in V_H}{\sup}{\frac{\int_{\Omega} p \ \nabla \cdot u \ dx}{||u||_{H(\div; \Omega); \kappa^{-1}}}}
\end{equation}
where 
$C_{\text{infsup}} := 
\left(
\underset{i}{\max} \ \underset{l}{\min} 
\int_{\omega_i} \kappa^{-1} \psi^{(i)}_l \cdot \psi^{(i)}_l \ dx + 1
\right)^{\frac{1}{2}}$.
\end{theorem}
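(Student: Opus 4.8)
The plan is to establish the discrete inf--sup (LBB) condition by the classical Fortin route: for each $p \in Q_H$ I will exhibit an explicit velocity $u \in V_H$ that tests $p$ efficiently, namely one satisfying $\nabla\cdot u = p$ together with a weighted energy bound $\|u\|_{\kappa^{-1},\Omega}^2 \preceq \big(\max_i \min_l \int_{\omega_i}\kappa^{-1}\psi_l^{(i)}\cdot\psi_l^{(i)}\,dx\big)\,\|p\|_{L^2(\Omega)}^2$. Once such a $u$ is in hand the theorem is immediate: $\nabla\cdot u = p$ gives $\int_\Omega p\,\nabla\cdot u\,dx = \|p\|_{L^2(\Omega)}^2$ and $\|\nabla\cdot u\|_{L^2(\Omega)}^2 = \|p\|_{L^2(\Omega)}^2$, so that $\|u\|_{H(\div;\Omega);\kappa^{-1}}^2 = \|u\|_{\kappa^{-1},\Omega}^2 + \|p\|_{L^2(\Omega)}^2 \le C_{\text{infsup}}^2\|p\|_{L^2(\Omega)}^2$, whence $\sup_{v\in V_H}\frac{\int_\Omega p\,\nabla\cdot v\,dx}{\|v\|_{H(\div;\Omega);\kappa^{-1}}} \ge \frac{\|p\|_{L^2(\Omega)}^2}{\|u\|_{H(\div;\Omega);\kappa^{-1}}} \ge \|p\|_{L^2(\Omega)}/C_{\text{infsup}}$. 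I note in passing that the ``$+1$'' inside $C_{\text{infsup}}$ is precisely the contribution of the divergence term $\|\nabla\cdot u\|_{L^2}^2 = \|p\|^2$ to the $H(\div)$-norm.

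The construction of $u$ rests on one structural fact extracted from the snapshot problem \eqref{eq:snapshot_problem}: since the divergence of every snapshot function is prescribed to be piecewise constant on $\omega_i$, the same holds for each multiscale basis function, so $\nabla\cdot\psi_l^{(i)}$ is constant on each coarse cell and hence $\nabla\cdot V_H \subseteq Q_H$. I would first record this, together with the observation that on $\omega_i = K_i^+\cup K_i^-$ the divergence is equal and opposite on the two cells, with $\int_{K_i^+}\nabla\cdot\psi_l^{(i)}\,dx$ equal to the net flux of $\psi_l^{(i)}$ through $E_i$. For each coarse edge I then select the basis function $\psi^{(i)}_{l^*}$ realizing $\min_l \int_{\omega_i}\kappa^{-1}\psi_l^{(i)}\cdot\psi_l^{(i)}\,dx$ --- this choice is what produces the $\min_l$ (and, after taking the worst edge, the $\max_i$) appearing in $C_{\text{infsup}}$ --- and set $u = \sum_i \beta_i\,\psi^{(i)}_{l^*}$. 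The coefficients $\beta_i$ are fixed by imposing $\nabla\cdot u = p$ cell by cell; because each cell's divergence is the signed sum of the net fluxes across its coarse edges, this is exactly a discrete flux--balance (divergence) system on the coarse grid, which is solvable in the quasi-one-dimensional thin-channel geometry, with the inlet/outlet edges supplying the flux needed to match the non-mean-zero part of $p$.

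The energy bound is then obtained from $\|u\|_{\kappa^{-1},\Omega}^2 = \int_\Omega\kappa^{-1}\big|\sum_i\beta_i\psi^{(i)}_{l^*}\big|^2\,dx \preceq \sum_i \beta_i^2\int_{\omega_i}\kappa^{-1}|\psi^{(i)}_{l^*}|^2\,dx$, using that the overlapping local domains $\omega_i$ cover $\Omega$ with bounded multiplicity; factoring out $\max_i\min_l\int_{\omega_i}\kappa^{-1}|\psi_l^{(i)}|^2$ leaves $\sum_i\beta_i^2$, which I intend to control by $\|p\|_{L^2(\Omega)}^2$ via the flux--balance solve and the normalization of the selected basis functions.

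I expect the main obstacle to be precisely this last estimate --- tying the weighted energy of $u$ to $\|p\|_{L^2}^2$ with the stated constant. It couples three delicate points: the normalization of the chosen $\psi^{(i)}_{l^*}$ (so that matching the cell divergences fixes each $\beta_i$ in terms of the cell values $p_K$ with the correct scaling), the bounded-overlap estimate for the cross terms among neighbouring $\omega_i$, and the stability of the coarse flux--balance solve, where controlling the constant (mean) part of $p$ genuinely relies on the normal-flux freedom admitted on the inlet and outlet boundaries. By contrast, the reduction in the first paragraph and the observation $\nabla\cdot V_H\subseteq Q_H$ are routine.
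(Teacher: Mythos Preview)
Your overall Fortin strategy coincides with the paper's: build a specific $u\in V_H$ from one (edge-normalized) multiscale basis per coarse edge so that $\nabla\cdot u=p$, and then bound $\|u\|_{H(\div;\Omega);\kappa^{-1}}$ by $C_{\text{infsup}}\|p\|_{L^2}$. The observation $\nabla\cdot V_H\subseteq Q_H$ and the reduction in your first paragraph are exactly how the paper proceeds.

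The divergence is at the one step you yourself flag as the obstacle: controlling the edge coefficients. The paper does \emph{not} solve the coarse flux--balance system directly. Instead it introduces an auxiliary elliptic problem $\Delta\xi=p$ (with no-flux on $\Gamma_{\text w}$), sets $v=\nabla\xi$ so that $\nabla\cdot v=p$, and defines the coefficients as the physical edge fluxes $b_i=\int_{E_i}v\cdot n\,ds$. These $b_i$ automatically satisfy the cellwise balance (divergence theorem), and the crucial bound on them comes from elliptic regularity plus a trace estimate, namely
\[
H\sum_{i}\int_{\partial K_i}(v\cdot n)^2\,ds \preceq \|p\|_{L^2(\Omega)}^2,
\]
which is precisely the ingredient that converts $\sum_i b_i^2\int_{\omega_i}\kappa^{-1}|\tilde\psi^{(i)}|^2$ into $(\max_i\min_l\int_{\omega_i}\kappa^{-1}|\psi_l^{(i)}|^2)\,\|p\|_{L^2}^2$.

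Your plan to obtain the $\beta_i$ by ``solving the discrete flux--balance'' gives the same $\beta_i$ (up to the kernel), but leaves you with no analytic handle on $\sum_i\beta_i^2$. In fact the naive bound fails uniformly in the mesh: on a chain of $N$ coarse cells of width $H=1/N$ with $p\equiv 1$, the optimal $\beta_i$ are the centred partial sums $\beta_i=H(i-N/2)$, so $\sum_i\beta_i^2\asymp N$ while $\|p\|_{L^2}^2=O(1)$. The missing $H$--scaling is exactly what the regularity/trace argument supplies. So the gap is real: without invoking the auxiliary Neumann problem (or an equivalent stability estimate for the inverse of the coarse divergence operator), your last step cannot be closed with a mesh-independent constant.
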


\begin{proof}
Consider the problem $\Delta \xi = p$ in $\Omega$ with zero conditions $\partial \xi / \partial n = 0$ on $\partial \Omega$.  
Let $v = \nabla \xi$,  then we have $\nabla \cdot v = p$. 
By the Green's identity and the regularity theory,  we have  \cite{MixedGMsFEM, chung2016mixed}
\begin{equation}
\label{eq:infsup1}
H \sum_{i=1}^{N_C} \int_{\partial K_i} (v\cdot n)^2 \ ds 
\leq ||p||^2_{L^2(\Omega)}. 
\end{equation}

Let 
\[
u = \sum_{i=1}^{N_E} b_i \tilde{\psi}^{(i)}_l, \quad 
b_i = \int_{E_i} u \cdot n \ ds, 
\]
where $\tilde{\psi}^{(i)}_r$ is the normalized basis functions, in this proof only,  with the property $\int_{E_i}  \tilde{\psi}^{(i)}_r \cdot n \ ds = 1$.
Therefore using property of normalized basis functions, we have 
\begin{equation}
\label{eq:infsup2}
\int_{\Omega} p^2 dx 
= \int_{\Omega} p \ \nabla \cdot v \ dx 
= \sum_{i=1}^{N_E} \int_{E_i} [p]  (u \cdot n) \ ds 
= \sum_{i=1}^{N_E} b_i [p]
= \sum_{i=1}^{N_E} \int_{E_i} b_i \ (\tilde{\psi}^{(i)}_l \cdot n) [p] \ ds 
= \int_{\Omega} p \ \nabla \cdot u \ dx,
\end{equation}
where $[p]$ is the jump of $p$ across the coarse edge $E_i$. 
In addition, we have the following estimate for the weighted $L^2$ norm for velocity
\begin{equation}
\label{eq:infsup3}
||u||_{\kappa^{-1}, \Omega}^{2} 
= \int_{\Omega} \kappa^{-1} u \cdot u \ dx
\leq 
\sum_{i=1}^{N_E} 
\int_{\omega_i} \kappa^{-1}  (b_i \tilde{\psi}^{(i)}_r)^2 \ dx
\preceq 
 \left(
\underset{i}{\max} \ \underset{r}{\min} 
\int_{\omega_i} \kappa^{-1} \psi^{(i)}_l \cdot \psi^{(i)}_l \ dx
\right) 
\ H \ \sum_{i=1}^{N_E} b_i^2.
\end{equation}
with the property $\int_{E_i} \psi^{(i)}_l \cdot n \ ds \neq 0$.

Using \eqref{eq:infsup1}, \eqref{eq:infsup3}  and the fact that $\nabla \cdot u = p$, we obtain
\[
||u||_{\kappa^{-1}, \Omega}^{2} + ||\nabla \cdot u||^2_{L^2(\Omega)}
\preceq 
 \left(
\underset{i}{\max} \ \underset{r}{\min} 
\int_{\omega_i} \kappa^{-1} \psi^{(i)}_l \cdot \psi^{(i)}_l \ dx
\right) 
||p||^2_{L^2(\Omega)} + ||p||^2_{L^2(\Omega)},
\]
or
\begin{equation}
\label{eq:infsup4}
||u||_{H(\div; \Omega); \kappa^{-1}}
\preceq 
C_{\text{infsup}}
||p||_{L^2(\Omega)}.
\end{equation}

Using \eqref{eq:infsup2} and \eqref{eq:infsup4}, we have
\[
\int_{\Omega} p \ \nabla \cdot u \ dx 
= \int_{\Omega} p^2 \ dx 
\succeq 
C_{\text{infsup}}^{-1}
||p||_{L^2(\Omega)} 
||u||_{H(\div; \Omega); \kappa^{-1}},
\]
and can obtain the bound \eqref{eq:inf_sup}.
\end{proof}

Finally, we state the prove the convergence of our method. 

%%% T3 %%%
\begin{theorem}
Let $u_h$ be the fine-grid solution to \eqref{fine_problem} and $u_H$ be the Mixed GMsFEM solution to \eqref{coarse_problem}. 
Then, the following estimate holds:
\begin{equation}
||u_h - u_H||^2_{\kappa^{-1}, \Omega} 
\preceq 
C_{\text{infsup}}^2 \Lambda^{-1}
\sum_{i=1}^{N_E} a^{(i)}(\hat{u}, \hat{u}) + 
\underset{K\in  \mathcal{T}_H}{\max}(\kappa^{-1}_{\text{min},K})
\sum_{i=1}^{N_C} ||f - \bar{f}||_{L^2(K_i)}^{2}
\end{equation}
where $\Lambda = \underset{i}{\text{min}}\ \lambda_{M^{(i)}+1}^{(i)}$ and $\hat{u}$ is the solution to the projection problem  of \eqref{snap-project}.
\end{theorem}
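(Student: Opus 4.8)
The plan is to combine a quasi-optimality (best-approximation) property of the mixed Galerkin solution with the spectral projection estimate coming from the local eigenvalue problems \eqref{spectral}, using the snapshot projection $\hat{u}$ of \eqref{snap-project} as the bridge between the fine and coarse solutions.

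First I would establish quasi-optimality within the divergence-compatible class. Since $V_H \subseteq V_h$ and $Q_H \subseteq Q_h$, subtracting \eqref{coarse_problem} from \eqref{fine_problem} gives the orthogonality relations $a(u_h - u_H, v) + b(p_h - p_H, v) = 0$ and $b(q, u_h - u_H) = 0$ for all $(v,q) \in V_H \times Q_H$. The decisive structural fact is that each snapshot function solves \eqref{eq:snapshot_problem} with a right-hand side $c_j^{(i)}$ that is constant on the coarse cell, so every multiscale basis function has piecewise-constant divergence and thus $\nabla \cdot V_H \subseteq Q_H$; combined with the second coarse equation this forces $\nabla \cdot u_H = \bar{f}$. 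Hence for any $w \in V_H$ with $\nabla \cdot w = \bar{f}$ we get $w - u_H \in V_H^0 := \{ v \in V_H : \nabla \cdot v = 0 \}$, on which $b(p_h - p_H, \cdot)$ vanishes, so $a(u_h - u_H, w - u_H) = 0$. A Cauchy--Schwarz step in the weighted norm then yields the quasi-optimality bound $||u_h - u_H||_{\kappa^{-1}, \Omega} \leq ||u_h - w||_{\kappa^{-1}, \Omega}$ for every such $w$.

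Second I would construct a good comparison function $w$. Recall $\hat{u} \in V_h \cap V_{\text{snap}}$ with $\nabla \cdot \hat{u} = \bar{f}$; let $\hat{u}_0 := \pi \hat{u} \in V_H$ be its projection onto the retained eigenvectors. The $s^{(i)}$-orthonormality of the eigenfunctions of \eqref{spectral} gives, on each $\omega_i$, $s^{(i)}(\hat{u} - \hat{u}_0, \hat{u} - \hat{u}_0) \leq (\lambda^{(i)}_{M^{(i)}+1})^{-1} a^{(i)}(\hat{u}, \hat{u})$; summing over edges, using $\Lambda = \min_i \lambda^{(i)}_{M^{(i)}+1}$ and the bounded overlap of the $\omega_i$, and recalling that $s^{(i)}$ controls both the weighted velocity norm and the divergence, bounds $||\hat{u} - \hat{u}_0||^2_{\kappa^{-1}, \Omega} + ||\nabla \cdot (\hat{u} - \hat{u}_0)||^2_{L^2(\Omega)}$ by $\Lambda^{-1} \sum_i a^{(i)}(\hat{u}, \hat{u})$. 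Since $\hat{u}_0$ need not reproduce $\bar{f}$ exactly, I would correct its divergence through the inf-sup bound \eqref{eq:inf_sup}: for $q := \bar{f} - \nabla \cdot \hat{u}_0 \in Q_H$ there is $z \in V_H$ with $\nabla \cdot z = q$ and $||z||_{H(\div; \Omega); \kappa^{-1}} \preceq C_{\text{infsup}} ||q||_{L^2(\Omega)} = C_{\text{infsup}} ||\nabla \cdot (\hat{u} - \hat{u}_0)||_{L^2(\Omega)}$. Then $w := \hat{u}_0 + z \in V_H$ satisfies $\nabla \cdot w = \bar{f}$ and $||\hat{u} - w||^2_{\kappa^{-1}, \Omega} \preceq C_{\text{infsup}}^2 \Lambda^{-1} \sum_i a^{(i)}(\hat{u}, \hat{u})$.

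Finally I would assemble the pieces: by quasi-optimality and the triangle inequality, $||u_h - u_H||_{\kappa^{-1}, \Omega} \leq ||u_h - w||_{\kappa^{-1}, \Omega} \leq ||u_h - \hat{u}||_{\kappa^{-1}, \Omega} + ||\hat{u} - w||_{\kappa^{-1}, \Omega}$, where the estimate \eqref{lemma1} bounds the first term by $\max_{K \in \mathcal{T}_H}(\kappa^{-1}_{\text{min},K}) \sum_i ||f - \bar{f}||^2_{L^2(K_i)}$ and the second is the spectral/inf-sup bound just derived; squaring and combining gives the claim. The step I expect to be the main obstacle is the divergence bookkeeping: confirming $\nabla \cdot V_H \subseteq Q_H$ and $\nabla \cdot u_H = \bar{f}$ so that the quasi-optimality is posed over the correct affine class, and ensuring the spectral estimate controls the divergence part (not only the weighted velocity part) so that the correction $z$ from \eqref{eq:inf_sup} is itself of order $\Lambda^{-1/2}$; the overlap constant arising when the local $s^{(i)}$-estimates are summed over edges must also be tracked, although it is harmless.
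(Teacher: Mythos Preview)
Your proof is correct, but it follows a genuinely different architecture from the paper's. The paper works directly with the error equations: after deriving the Galerkin orthogonality and replacing $(p_h,u_h)$ by $(\hat p,\hat u)$ on the coarse test side, it sets $v_H=\hat u_{\text{ms}}-u_H$, $q_H=\hat p-p_H$ to obtain the identity $\int_\Omega \kappa^{-1}(u_h-u_H)\cdot(\hat u_{\text{ms}}-u_H)=\int_\Omega(\hat p-p_H)\,\nabla\cdot(\hat u_{\text{ms}}-\hat u)$, then bounds $\|\hat p-p_H\|_{L^2}$ by the inf-sup condition (used as a \emph{pressure} estimate) together with the first error equation, and finishes by Cauchy--Schwarz, a triangle inequality on $\|u_h-\hat u_{\text{ms}}\|$, and the spectral bound. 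You instead isolate a clean quasi-optimality statement $\|u_h-u_H\|_{\kappa^{-1}}\le\|u_h-w\|_{\kappa^{-1}}$ over the affine class $\{w\in V_H:\nabla\cdot w=\bar f\}$, then build $w=\hat u_0+z$ where the inf-sup condition is invoked not to bound a pressure but to \emph{lift} the divergence defect $\bar f-\nabla\cdot\hat u_0$ back into $V_H$. Your route is more modular (abstract Brezzi-type best approximation, then an explicit interpolant); the paper's is more computational but avoids the extra lifting step. Both rely on the same three ingredients---$\nabla\cdot V_H\subseteq Q_H$, the inf-sup bound \eqref{eq:inf_sup}, and the spectral estimate $s^{(i)}(\hat u-\hat u_0,\hat u-\hat u_0)\le(\lambda^{(i)}_{M^{(i)}+1})^{-1}a^{(i)}(\hat u,\hat u)$---so the final constants and the appearance of $C_{\text{infsup}}^2\Lambda^{-1}$ match.
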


\begin{proof}
By \eqref{fine_problem} and \eqref{coarse_problem},  and the fact that $V_H\subset V_{h}$, we get that
\begin{equation}
\label{error1}
\begin{split}
\int_{\Omega} \kappa^{-1} (u_h - u_H) \cdot v_H  \ dx
+
\int_{\Omega} (p_h - p_H)  \ \nabla \cdot v_H \ dx
&= 0,  \quad \forall v_H \in V_H,\\
\int_{\Omega} q_H \ \nabla \cdot (u_h - u_H) \ dx 
&=0, \quad \forall q_H \in Q_{H}.
\end{split}
\end{equation}
Combining \eqref{proj_snap},  we have 
\[
\int_K q_H \ \nabla \cdot (u_h - \hat{u}) \ dx 
= \int_K (f - \bar{f}) \ q_H \ dx 
= 0,  \quad \forall q_H \in Q_H.
\]
We then can easily get the following equalities by \eqref{snap-project}, 
\[
\int_{\Omega} p_h \ \nabla \cdot v_H \ dx 
= \int_{\Omega} \hat{p} \ \nabla \cdot v_H \ dx,
\]
noticing that $q_H$ and $\nabla \cdot v_H$ are both constants in any coarse block $K$.

Therefore,  \eqref{error1} can be further written as 
\begin{equation}
\label{error2}
\begin{split}
\int_{\Omega} \kappa^{-1}(u_h - u_H)\cdot v_H  \ dx
- \int_{\Omega} (\hat{p} - p_H) \ \nabla\cdot v_H \ dx 
& = 0,  \quad \forall v_H \in V_H\\
\int_{\Omega} q_H \nabla \cdot (\hat{u} - u_H) \ dx 
& = 0, \quad \forall q_H \in Q_H.
\end{split}
\end{equation}
Since $\hat{u} \in V_{\text{snap}}$, we can rewrite $\hat{u}$ as
\[
\hat{u}
 = \sum_{i=1}^{N_E}\sum_{l=1}^{J^{(i)}} \hat{u}_{ij} \psi_l^{(i)}.
\]
We then define $\hat{u}_{\text{ms}} \in V_H$ as 
\[
\hat{u}_{\text{ms}} 
= \sum_{i=1}^{N_E} \sum_{l=1}^{M^{(i)}} \hat{u}_{ij}  \psi_l^{(i)}.
\]
where $M^{(i)}$ is the number of multiscale basis we select for each coarse neighborhood $\omega_i$. 
	
We further rewrite \eqref{error2} as 
\begin{equation}
\label{error3}
\begin{split}
\int_{\Omega} \kappa^{-1}(u_h - u_H) \cdot v_H \ dx 
- 
\int_{\Omega}(\hat{p}-p_H)  \  \nabla \cdot v_H \ dx 
= 0,  
\quad \forall v_H \in V_H,\\
\int_{\Omega} q_H \ \nabla \cdot (\hat{u}_{\text{ms}} - u_H) \  dx 
= 
\int_{\Omega} q_H \ \nabla \cdot (\hat{u}_{\text{ms}} - \hat{u}) \ dx,  
\quad \forall q_H \in Q_H.
\end{split}
\end{equation}
Let $v_H = \hat{u}_{\text{ms}} - u_H$ and $q_H = \hat{p} - p_H$ and plug back to \eqref{error3}.  
After adding equations up, we get that
\begin{equation}
\label{error4}
\int_{\Omega} \kappa^{-1} (u_h - u_H) \cdot (\hat{u}_{\text{ms}} - u_H) \ dx 
= 
\int_{\Omega} (\hat{p}-p_H) \  \nabla \cdot (\hat{u}_{\text{ms}} - \hat{u}) \ dx. 
\end{equation}

Combining \eqref{error3} and the inf-sup condition \eqref{eq:inf_sup},  we further have To apply the condition, we need $\hat{p} \in V_H$, should we go back to letting $\hat{p}_H  \in V_H $ and $\int \hat{p}_H  = \int \hat{p}$ 
\[
\begin{split}
||\hat{p} - p_H||_{L^2(\Omega)} 
& \leq	
C_{\text{infsup}} \underset{u \in V_H}{\text{sup}} 
\frac{\int_{\Omega} (\hat{p} - p_H) \ \nabla\cdot u \ dx}{||u||_{H(\div; \Omega); \kappa^{-1}}} \\
& = 
C_{\text{infsup}} \underset{u \in V_H}{\text{sup}}
\frac{\int_{\Omega}\kappa^{-1}(u_h - u_H)\cdot u \ dx}{||u||_{H(\div; \Omega); \kappa^{-1}}} \\
& \preceq 
C_{\text{infsup}} \underset{u \in V_H}{\text{sup}}
\frac{||u_h - u_H||_{\kappa^{-1}, \Omega} \ ||u||_{\kappa^{-1}, \Omega}}{||u||_{H(\div; \Omega); \kappa^{-1}}}
\end{split}
\]
Since $V_H$ is a finite dimensional space,  all norms are equivalent. Thus,
\[
||\hat{p} - p_H||_{L^2(\Omega)} 
\preceq  
C_{\text{infsup}} ||u_h-u_H||_{\kappa^{-1}, \Omega}.
\]
	
By the definition of the bilinear form $s^{(i)}(u,v)$
\[
\int_{\Omega} \left( \nabla \cdot (\hat{u}_{\text{ms}} - \hat{u}) \right)^{2} \ dx
\preceq 
\sum_{i=1}^{N_E} 
\int_{\omega_i} \left( \nabla \cdot (\hat{u}_{\text{ms}} - \hat{u}) \right)^{2} \ dx
\preceq 
\sum_{i=1}^{N_E} s^{(i)}(\hat{u}_{\text{ms}} - \hat{u},  \hat{u}_{\text{ms}} - \hat{u})
\]
Then,  by \eqref{error4} and Cauchy–Schwarz inequality,  we can obtain that
\[
\begin{split}
||u_h - u_H||_{\kappa^{-1},\Omega}^2
& = 
\int_{\Omega}\kappa^{-1}(u_h - u_H)^2 \ dx \\
& \leq 
\left| 
\int_{\Omega}\kappa^{-1}(u_h - u_H) (u_h - \hat{u}_{\text{ms}}) \ dx \
\right|  
+ 
\left| 
\int_{\Omega} \kappa^{-1 }(u_h - u_{H}) (\hat{u}_{\text{ms}} - u_H) \ dx \
\right|  \\
&= 
\left| 
\int_{\Omega} \kappa^{-1} (u_h - u_H) (u_h - \hat{u}_{\text{ms}}) \ dx \
\right|
+ 
\left|
\int_{\Omega}(\hat{p} - p_H) \ \nabla \cdot (\hat{u}_{\text{ms}} - \hat{u}) \ dx \ 
\right| \\
& \preceq 
||u_h - u_H||_{\kappa^{-1}, \Omega} \ 
||u_h-\hat{u}_{\text{ms}}||_{\kappa^{-1}, \Omega} 
+ 
|| \nabla\cdot(\hat{u}_{\text{ms}}-\hat{u})||_{L^2(\Omega)} \ 
|| \hat{p} - p_H||_{L^2(\Omega)} \\
& \preceq  
||u_h - u_H||_{\kappa^{-1}, \Omega} \ 
||u_h - \hat{u}_{\text{ms}}||_{\kappa^{-1}, \Omega}
+
\left(
\sum_{i=1}^{N_E} 
s^{(i)}(\hat{u}_{\text{ms}} - \hat{u},  \hat{u}_{\text{ms}} - \hat{u})
\right)^{\frac{1}{2}} \ 
C_{\text{infsup}} ||u_h - u_H||_{\kappa^{-1}, \Omega}
\end{split}
\]

By deviding $||u_h-u_H||_{\kappa^{-1}, \Omega}$ for both sides of the inequality,  we then have 
\[
||u_h - u_H||_{\kappa^{-1}, \Omega}  
\preceq 
||u_h - \hat{u}_{\text{ms}}||_{\kappa^{-1}, \Omega}
+
\left(
\sum_{i=1}^{N_E} 
s^{(i)}(\hat{u}_{\text{ms}} - \hat{u},  \hat{u}_{\text{ms}} - \hat{u})
\right)^{\frac{1}{2}} \ C_{\text{infsup}}
\]
	
For the first term on the right-hand, by triangle inequality, we have 
\[
||u_h - \hat{u}_{\text{ms}}||_{\kappa^{-1}, \Omega}   
\leq 
||u_h - \hat{u}||_{\kappa^{-1}, \Omega}  
+ 
||\hat{u}_{\text{ms}} - \hat{u}||_{\kappa^{-1}, \Omega}  
\]

Moreover, by the definition of the spectral problem \eqref{spectral} we have 
\[
||\hat{u}_{\text{ms}} - \hat{u}||_{\kappa^{-1}, \Omega}  
\preceq 
\sum_{i=1}^{N_E} ||\hat{u}_{\text{ms}} - \hat{u}||_{\kappa^{-1}, \omega_i}  
\preceq 
\sum_{i=1}^{N_E} s^{(i)}(\hat{u}_{\text{ms}} - \hat{u},  \hat{u}_{\text{ms}} - \hat{u}) 
\]

Thus the following inequality holds:
\[
||u_h-u_H||_{\kappa^{-1}, \Omega}   
\preceq 
||u_h-\hat{u}||_{\kappa^{-1}, \Omega}  
+
\left(
\sum_{i=1}^{N_E} s^{(i)}(\hat{u}_{\text{ms}} - \hat{u},  \hat{u}_{\text{ms}} - \hat{u})
\right)^{\frac{1}{2}} \ C_{\text{infsup}}
\]

By \eqref{lemma1} ,we can estimate the first term on the right hand side while the second term can be determined by the fact that eigenfunctions are orthogonal, thus
\[
\begin{split}
s^{(i)}(\hat{u}_{\text{ms}} - \hat{u}, \hat{u}_{\text{ms}} - \hat{u})) 
& = 
s^{(i)}\left( 
\sum_{i=1}^{N_E} \sum_{j = M^{(i)}+1}^{J^{(i)}} \hat{u}_{ij} \psi_j^{(i)},  
\sum_{i=1}^{N_E} \sum_{j = M^{(i)}+1}^{J^{(i)}} \hat{u}_{ij} \psi_j^{(i)} \right)  = 
\sum_{j = M^{(i)}+1}^{J^{(i)}}(\lambda_j^{(i)})^{-1} \ \hat{u}_{ij}^2 \ a^{(i)}(\psi_j^{(i)},  \psi_j^{(i)})\\
& \leq 
(\lambda_{M^{(i)}+1}^{(i)})^{-1} \ a_i(\hat{u}_{\text{ms}} - \hat{u}, \hat{u}_{\text{ms}} - \hat{u})
\leq 
(\lambda_{M^{(i)}+1}^{(i)})^{-1} \ a_i(\hat{u},  \hat{u}),
\end{split}
\]
which gives us the conclusion of this theorem.
\end{proof}

\section{Numerical results}

We consider the numerical simulations of an elliptic equation in three different thin domains $\Omega$:
\begin{itemize}
\item   \textit{Geometry 1}: A rectangular thin domain of size $L_x \times L_y$ with $L_x = 1.0$ and $L_y = 0.1$.
\item  \textit{Geometry 2}: A thin domain with $L_x = 1.0$ and rough top and bottom boundaries with $L_y \in [0.057,0.251]$. 
\item  \textit{Geometry 3}: Y - shaped thin domain with varying thickness. 
\end{itemize}

\begin{figure}[h!]
\centering
\includegraphics[width=0.49\linewidth]{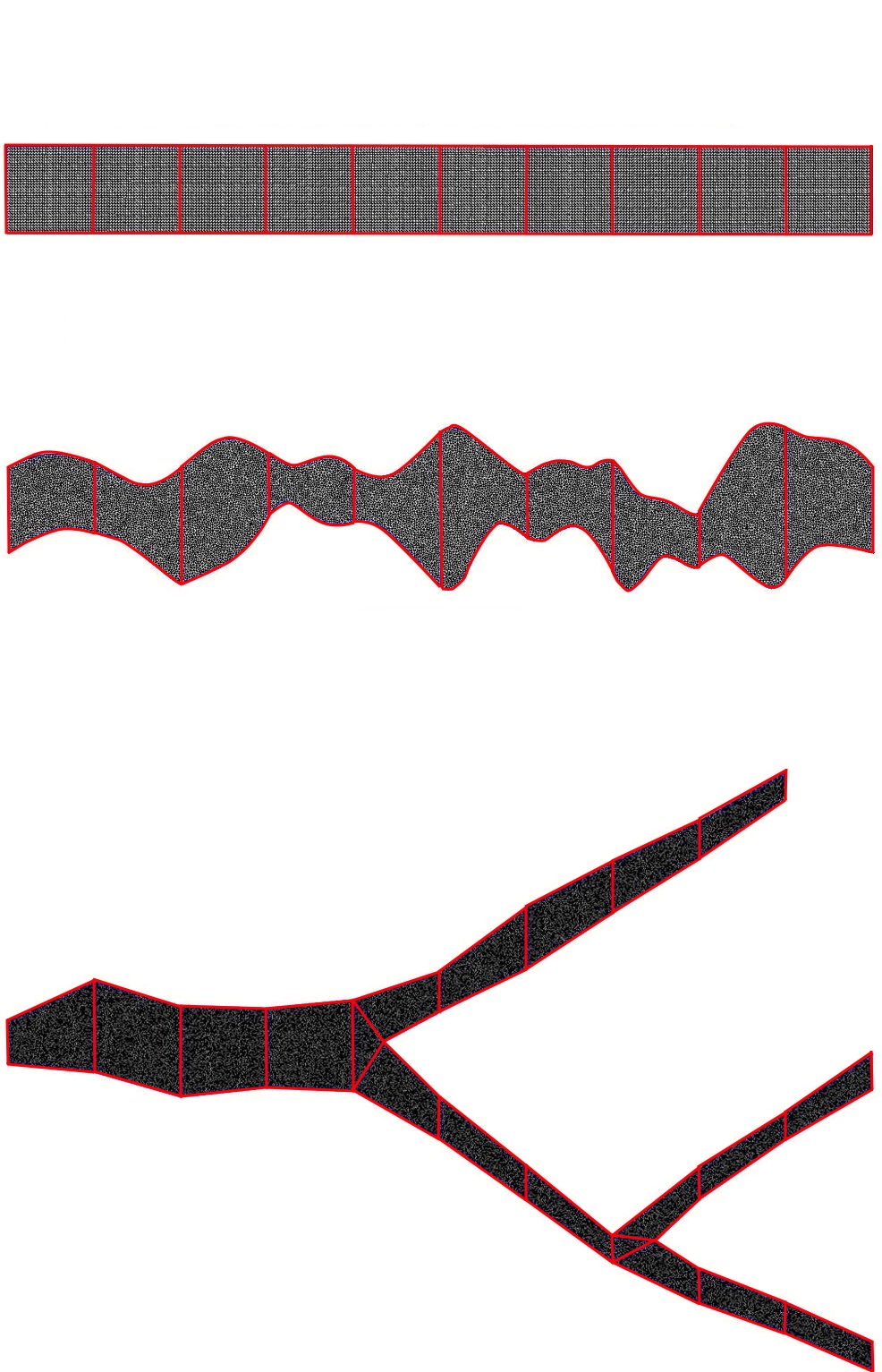} 
\includegraphics[width=0.49\linewidth]{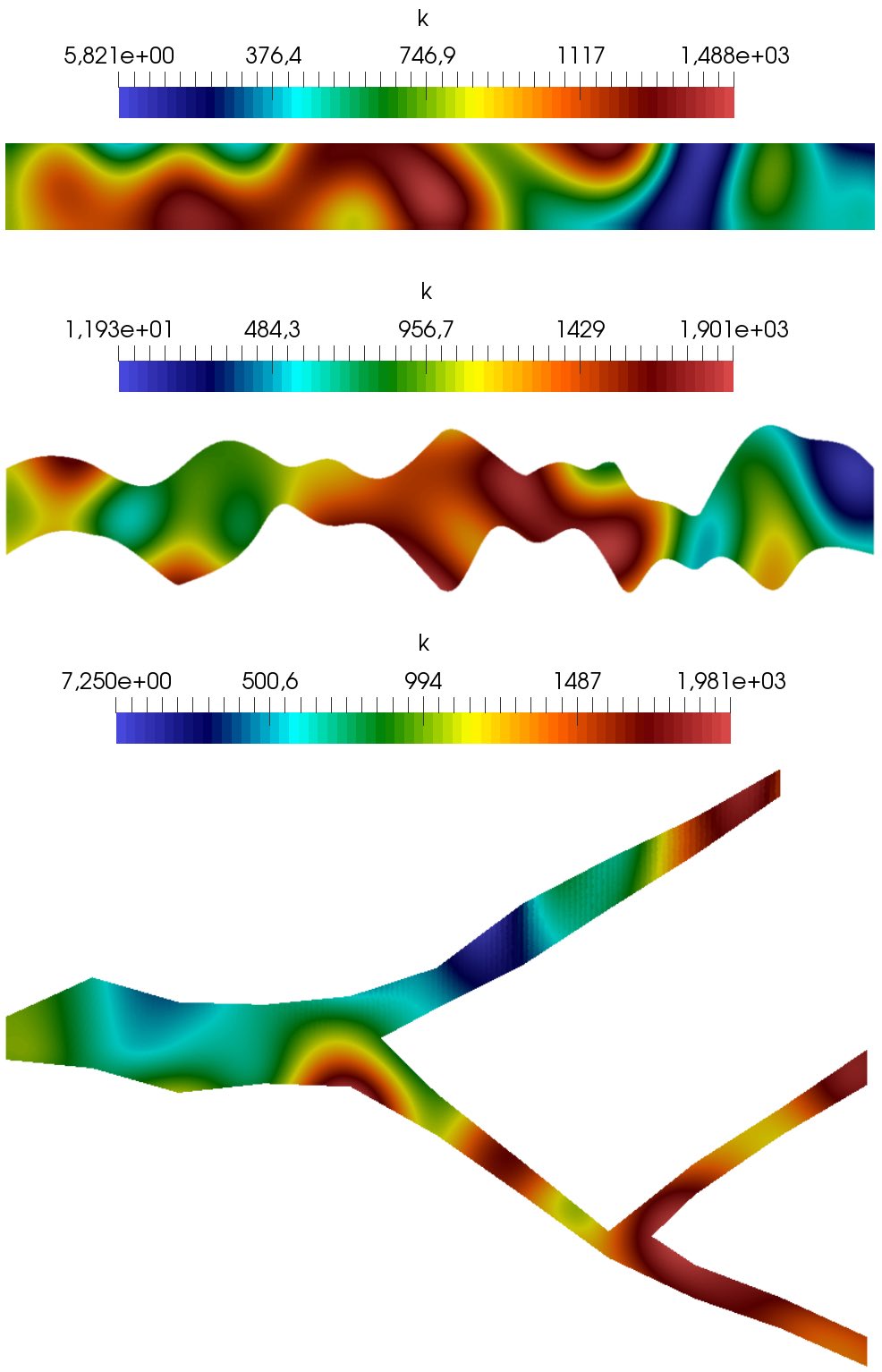} 
\caption{Computational domains with fine grid and  heterogeneous coefficient $k$ for \textit{Geometry 1}(first row), \textit{Geometry 2}(second row) and \textit{Geometry 3}(third row). 
First column:  fine grid.  
Second column: heterogeneous coefficient. }
\label{pic6}
\end{figure}

For \textit{Geometry 1} we use a structured triangular $160 \times 16$ fine grid with $31072$ facets and $20480$ cells. 
For \textit{Geometry 2,} and \textit{Geometry 3}, we consider unstructured fine grids with following parameters: 
$42951$ facets, $28351$ cells (\textit{Geometry 2});  
$95217$ facets, $62819$ cells (\textit{Geometry 3}). 
The computational domains, fine and coarse grids are presented in Figure \ref{pic6}. For \textit{Geometry 1}, we use a structured rectangular coarse grid with 10  coarse cells. For \textit{Geometry 2}, we use a coarse grid with 10 cells and 11 facets and for \textit{Geometry 3}, we use a coarse grid with $20$ cells and $23$ facets.   Heterogeneous coefficient $k$ is depicted in Figure \ref{pic6} (second column). 

We consider two test cases with different boundary conditions and source terms:
\begin{itemize}
\item \textit{Test 1}: We set $p_1=1$ on the inlet boundary $\Gamma _{in}$,  $p_2 = 0$ on the outlet boundary $\Gamma _{out}$ and $f=0$.
\item \textit{Test 2}: We set $p_1=0$ on the inlet and outlet boundaries $\Gamma _{in} \cup \Gamma _{out}$ and $f=1$.
\end{itemize}
On the top and bottom boundaries $\Gamma _w$ we set zero flux boundary condition for both test cases.

We calculate the relative $L^2$ error for pressure on the coarse grid and error for velocity on the fine grid as follows
\[
e_p^H = \frac{||p_H - \bar{p}_h||_{L^2(\Omega)}}{||\bar{p}_h||_{L^2(\Omega)}} \times 100 \%,  \quad
e_u^h = \frac{||u_H - u_h||_{\kappa^{-1}, \Omega}}{||u_h||_{\kappa^{-1}, \Omega}} \times 100 \%, 
\]
where 
$(u_h, p_h)$ are the reference fine-grid solutions,  
$(u_H, p_H)$ are the multiscale solutions using Mixed GMsFEM,  
$\bar{p}_h$ is the reference pressure on coarse grid (coarse cell average for $p_h$).

% TEST 1
\begin{figure}[h!]
\centering
\includegraphics[width=0.9\linewidth]{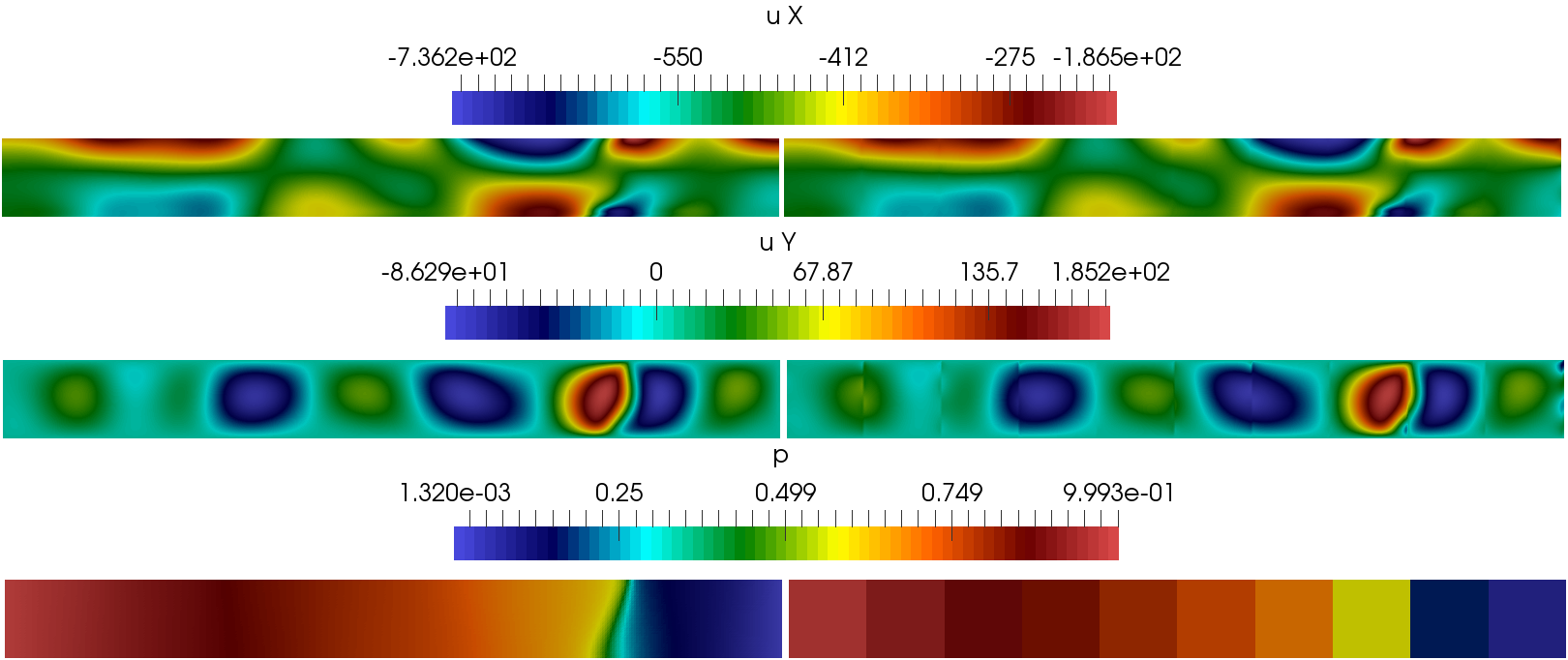} 
\caption{Fine grid solution (left column) and multiscale solution using $4$ multiscale basis functions (right column). \textit{Geometry 1, Test 1}}
\label{res1}
\end{figure}

\begin{figure}[h!]
\centering
\includegraphics[width=0.9\linewidth]{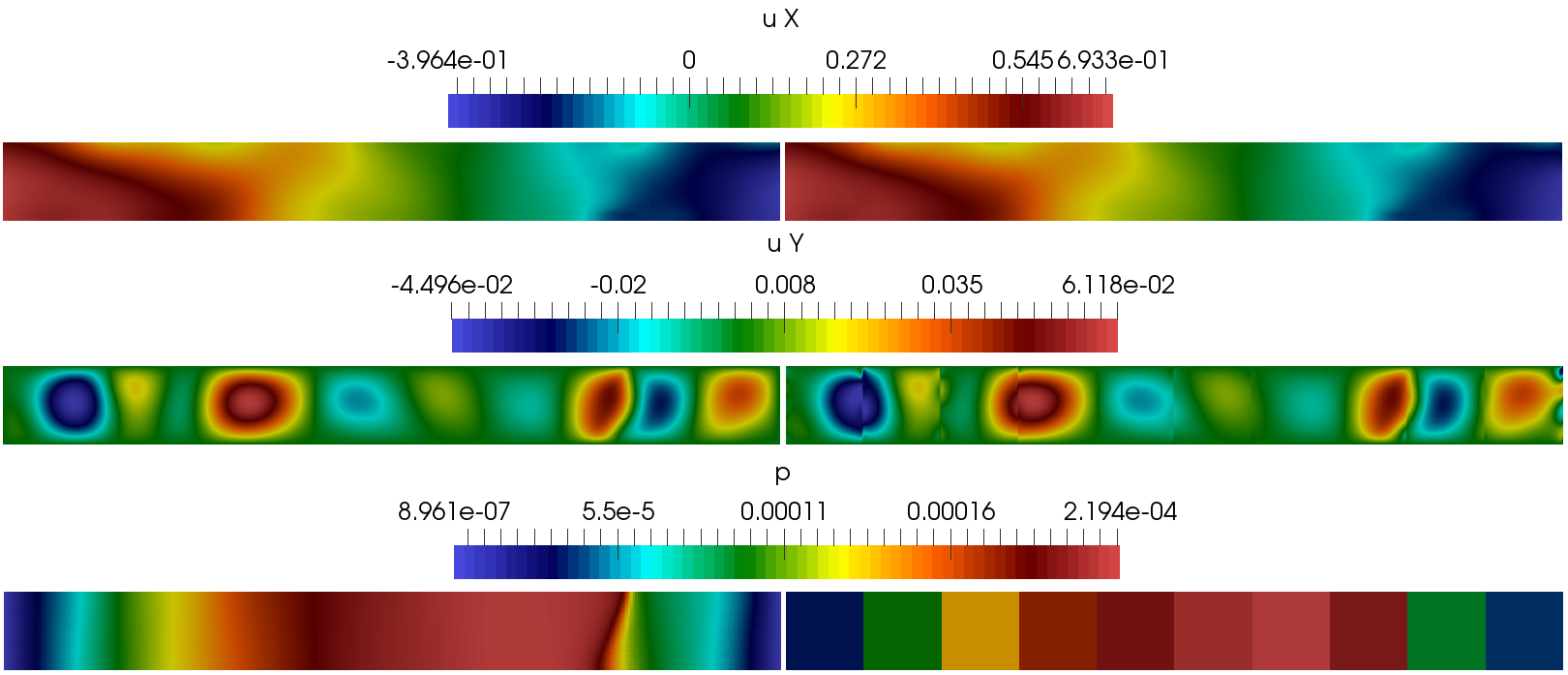} 
\caption{Fine grid solution (left column) and multiscale solution using 4  multiscale basis functions (right column). \textit{Geometry 1, Test 2}}
\label{res1.1}
\end{figure}

\begin{table}[h!]
\begin{center}
\begin{tabular}{|c|c|cc|}
\hline
$M$ & $DOF_c$ & $e_u^h$, \% & $e_p^H$, \%  \\
\hline
1 & 21 & 14.824 & 0.883 \\
2 & 32 & 3.066 & 0.114 \\
4 & 54 & 0.613 & 0.008 \\
8 & 98 & 0.161 & 0.001 \\
12 & 142 & 0.105 & 0.001 \\
\hline
\end{tabular}
\,\,\,\,
\begin{tabular}{|c|c|cc|}
\hline
$M$ & $DOF_c$ & $e_u^h$, \% & $e_p^H$, \%  \\
\hline
1 & 21 & 12.034 & 2.094 \\
2 & 32 & 3.249 & 0.232 \\
4 & 54 & 0.614 & 0.013 \\
8 & 98 & 0.146 & 0.001 \\
12 & 142 & 0.071 & 0.001 \\
\hline
\end{tabular}
\end{center}
\caption{Numerical results for  \textit{Geometry 1}. Relative error in $L^2$ norm for different number of multiscale basis functions. Left: \textit{Test 1}. Right: \textit{Test 2}.}
\label{t1}
\end{table}

We start multiscale modeling for test problems in \textit{Geometry 1}. 
Fine scale and multiscale solutions using 4 multiscale basis function are presented in Figure \ref{res1} for \textit{Test 1} and in Figure \ref{res1.1} for \textit{Test 2}.  In Table \ref{t1} we present the relative errors  for \textit{Test 1} and \textit{Test 2}. In this table we show the errors associated with different number of multiscale basis functions. Here, $DOF_c=M \times N_E$ and $DOF_f$ denotes the size of the multiscale and fine grid solutions ($DOF_f = 51 552$), $M$ is the number of  multiscale basis functions. From this tables, we can see that Mixed GMsFEM can provide solutions with high accuracy in both cases. We can observe that the accuracy of the method is almost the same for both sets of input parameters: \textit{Test 1} and \textit{Test 2}.  In fact, using $4$ multiscale basis functions is sufficient to obtain a good solution. We can provide the error less than $1 \%$, using only $0.1 \%$ degrees of freedom from solution on the fine grid. We can also get even better accuracy of the Mixed GMsFEM by increasing the number of multiscale basis functions in each local domain $\omega _i$.

% TEST 2
\begin{figure}[h!]
\centering
\includegraphics[width=0.9\linewidth]{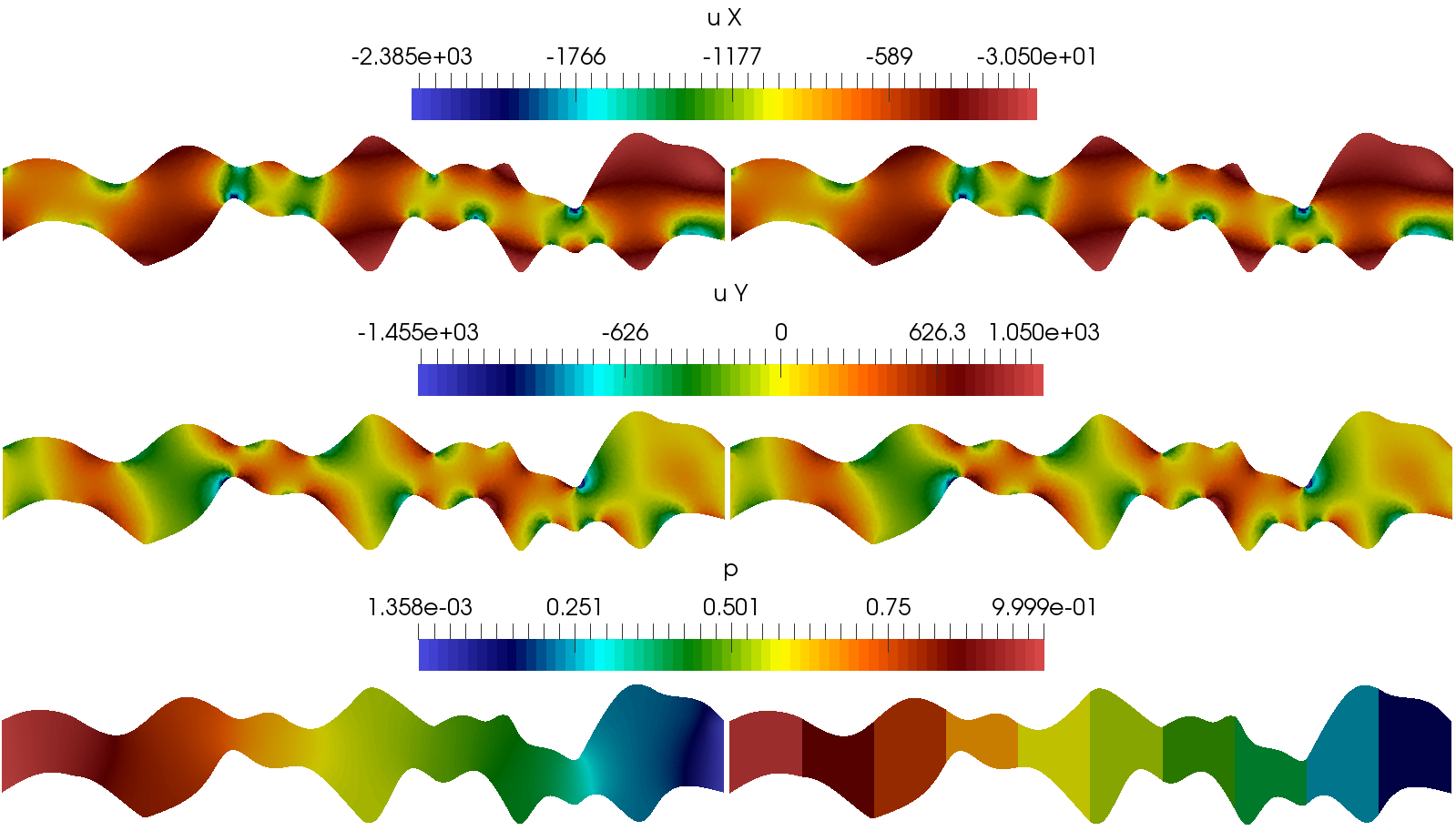} 
\caption{Fine grid solution (right column) and multiscale solution using 8  multiscale basis functions (left column). \textit{Geometry 2, Test 1.}}
\label{res2}
\end{figure}

\begin{figure}[h!]
\centering
\includegraphics[width=0.9\linewidth]{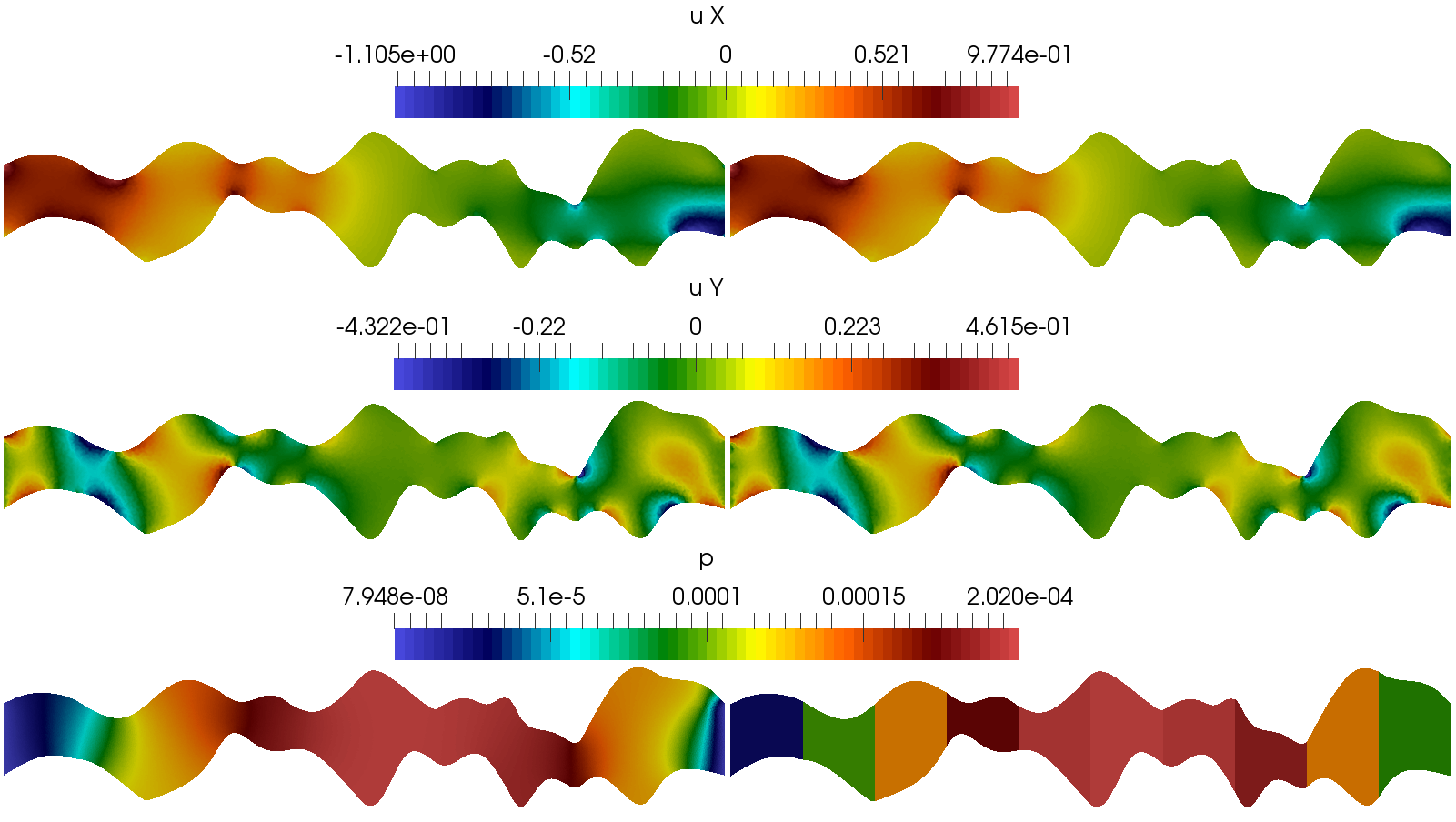} 
\caption{Fine grid solution (right column) and multiscale solution using 8  multiscale basis functions (left column). \textit{Geometry 2, Test 2.}}
\label{res2.1}
\end{figure}

\begin{table}[h!]
\begin{center}
\begin{tabular}{|c|c|cc|}
\hline
$M$ & $DOF_c$ & $e_u^h$, \% & $e_p^H$, \%   \\
\hline
1 & 21 & 22.032 & 6.358 \\
2 & 32 & 14.021 & 2.468 \\
4 & 54 & 3.201 & 0.165 \\
8 & 98 & 0.863 & 0.007 \\
12 & 142 & 0.664 & 0.006 \\
\hline
\end{tabular}
\,\,\,\,
\begin{tabular}{|c|c|cc|}
\hline
$M$ & $DOF_c$ & $e_u^h$, \% & $e_p^H$, \%   \\
\hline
1 & 21 & 25.235 & 12.077 \\
2 & 32 & 14.018 & 4.023 \\
4 & 54 & 4.299 & 0.299 \\
8 & 98 & 1.335 & 0.025 \\
12 & 142 & 1.063 & 0.015 \\
\hline
\end{tabular}
\end{center}
\caption{Numerical results for  \textit{Geometry 2}. Relative error in $L^2$ norm for different number of multiscale basis functions. Left: \textit{Test 1}. Right: \textit{Test 2}.}
\label{t2}
\end{table}

The multiscale modeling of flow in \textit{Geometry 2} is a very interesting case, because we can test Mixed GMsFEM associated to local domains with complex shapes. We completed the multiscale modeling for this case, and the numerical results are presented in Figure \ref{res2} and in \ref{res2.1} for \textit{Test 1} and \textit{Test 2} respectively. For both test problems, we use $8$ multiscale basis functions in each local domain.   In Table \ref{t2} we demonstrate the relative errors of the multiscale solution compared to the fine grid solution ($DOF_f = 71302$) when using different number of multiscale basis functions where in the left table are the errors for\textit{Test 1} and in the right table are errors for \textit{Test 2}. The behavior of the errors is similar to the results with the previous geometry, but in this case, we need to use $8$ multiscale basis functions to obtain a good solution. Similar to past results,  we need to use only $0.1 \%$ degrees of freedom from solution on the fine grid to obtain the error less than $1 \%$.  The increase in the error in this case is justified by the fact that the computational domain has a more complex shape. We can also observe an improvement in the accuracy of the method with an increase in the number of multiscale basis functions.  As we can see, Mixed GMsFEM perfectly solves tasks in domains with such complex shape, which shows that the proposed multiscale basis functions can describe the properties of the media with different shape of local domains very well.   

% TEST 3

\begin{table}[h!]
\begin{center}
\begin{tabular}{|c|c|cc|}
\hline
$M$ & $DOF_c$ & $e_u^h$, \% & $e_p^H$, \%   \\
\hline
1 & 42 & 16.175 & 1.202 \\
2 & 64 & 5.345 & 0.162 \\
4 & 108 & 2.214 &  0.021 \\
8 & 196 & 1.005 & 0.017 \\
12 & 284 & 0.698 & 0.017 \\
\hline
\end{tabular}
\,\,\,\,
\begin{tabular}{|c|c|cc|}
\hline
$M$ & $DOF_c$ & $e_u^h$, \% & $e_p^H$, \%   \\
\hline
1 & 42 & 14.263 & 2.131 \\
2 & 64 & 4.125 & 0.168 \\
4 & 108 & 2.063 & 0.045 \\
8 & 196 & 0.969 & 0.013 \\
12 & 284 & 0.614 & 0.011 \\
\hline
\end{tabular}
\end{center}
\caption{Numerical results for  \textit{Geometry 3}. Relative error in $L^2$ norm for different number of multiscale basis functions. Left: \textit{Test 1}.  Right: \textit{Test 2}.}
\label{t3}
\end{table}

Finally, we consider the numerical experiment in the more complicated Y-tube domain (\textit{Geometry 3}).  Numerical result for \textit{Test 1} and \textit{Test 2} using $8$ multiscale basis function are presented in Figures \ref{res3} and \ref{res3.1} respectively. In Table \ref{t3}, we present the relative errors between multiscale solution and fine grid solution ($DOF_f = 158036$) for different number of multiscale basis functions for both test problems. We can observe good accuracy of the Mixed GMsFEM solutions in this case using $8$ multiscale basis functions.   In this case,  we also need to use only $0.1 \%$ degrees of freedom from solution on the fine grid to obtain the error around $1 \%$. We also observe the convergence in error with an increase in the number of multiscale bases. The behavior of the solution is similar to the previous results. So we can see, that the Mixed GMsFEM provide a good solution for problems even in the computational field with such a complex shape. The results suggest that this method can be applied to simulate fluid flow in applications such as fluid flow in blood vessels.    

\begin{figure}[h!]
\centering
\includegraphics[width=0.9\linewidth]{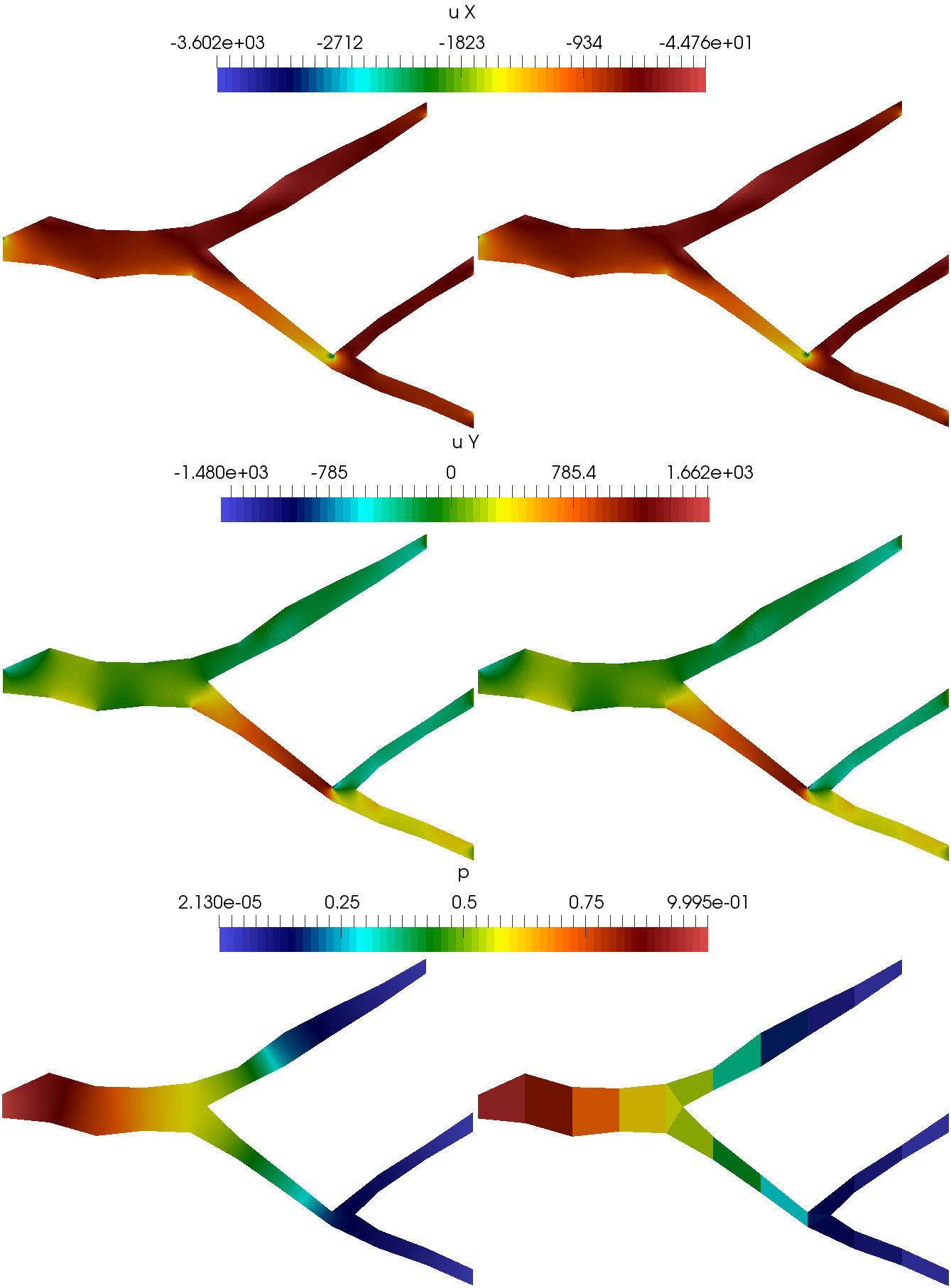} 
\caption{Fine grid solution (left column) and multiscale solution using $8$ multiscale basis functions (right column). \textit{Geometry 3}, \textit{Test 1}.}
\label{res3}
\end{figure}
\FloatBarrier

\begin{figure}[h!]
\centering
\includegraphics[width=0.9\linewidth]{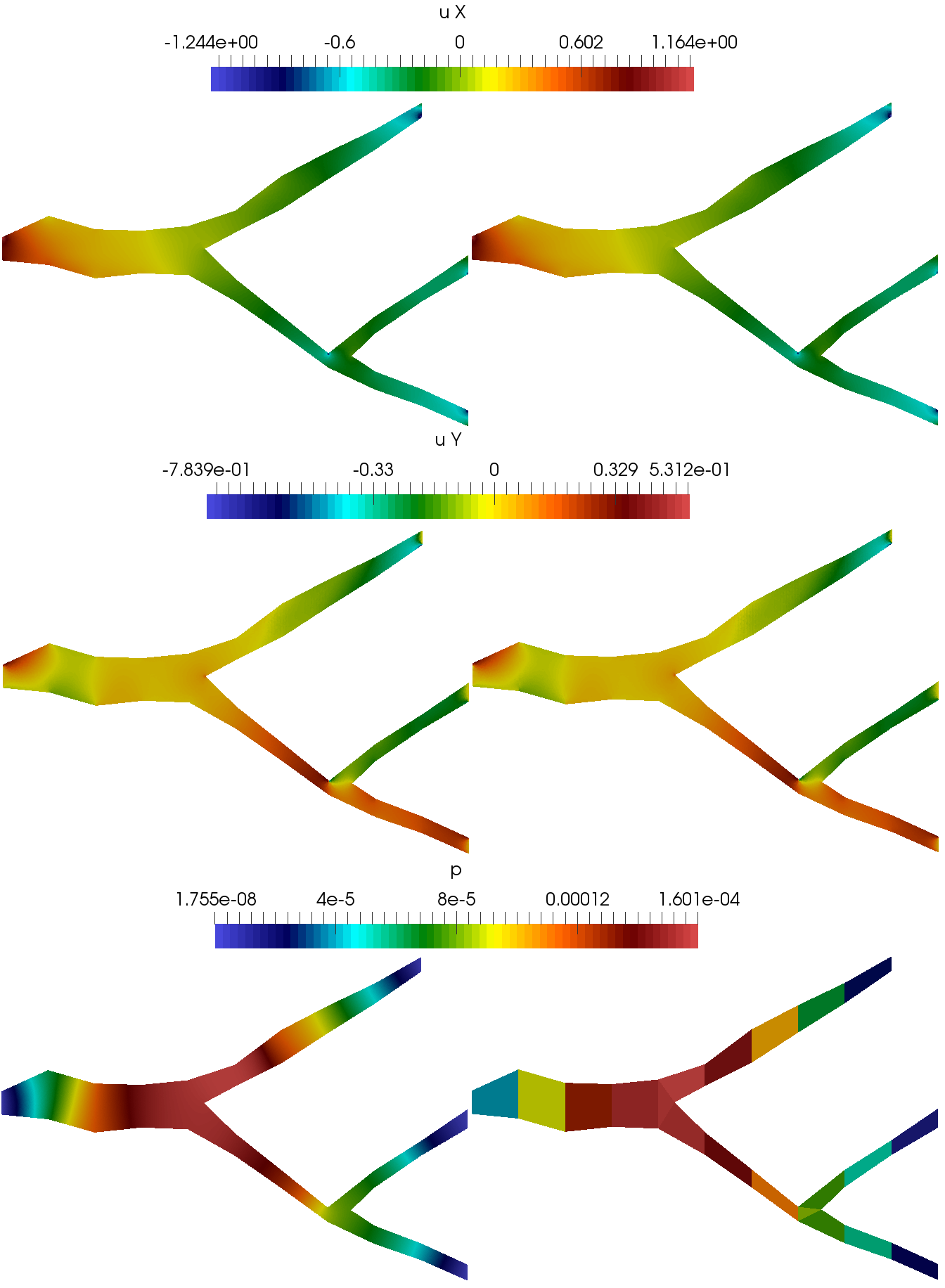} 
\caption{Fine grid solution (left column) and multiscale solution using $8$ multiscale basis functions (right column). \textit{Geometry 3}, \textit{Test 2}.}
\label{res3.1}
\end{figure}
\FloatBarrier

%k=1
%\begin{tabular}{|c|c|cc|}
%\hline
%$M$ & $DOF_c$ & $u$, \% & $p$, \%   \\
%\hline
%1 & 42 & 12.523 & 0.844 \\
%2 & 64 & 4.171 & 0.087 \\
%4 & 108 & 1.785 & 0.018 \\
%8 & 196 & 0.758 & 0.004 \\
%12 & 284 & 0.396 & 0.001 \\
%\hline
%\end{tabular}
%\,\,\,\,

\section{Conclusions}

We developed an algorithm based on the Mixed Generalized Finite Element Method for flow problem in thin domains. Modeling was carried out in thin domains with different shapes, including domains with complex geometries. We also conducted a study over the accuracy of the multiscale solution associated with different number of multiscale basis functions.  The obtained results show good applicability of the method to this type of problems. The proposed multiscale model reduction technique allows us to describe the heterogeneities of the domain on micro-level with high accuracy and further lead to multiscale solutions with small errors while reducing the size of the system significantly.

\section{Acknowledgements}
This work is supported by the mega-grant of the Russian Federation Government N14.Y26.31.0013 and RFBR N19-31-90066.
The research of Eric Chung is partially supported by the Hong Kong RGC General Research Fund (Project numbers 14304719 and 14302620) and CUHK Faculty of Science Direct Grant 2020-21.

\bibliographystyle{unsrt}
\bibliography{lit}

\end{document}